\newcommand{\sign}{\operatorname{sign}}
\newcommand{\lc}{\operatorname{\mathbf{lc}}}
\newcommand{\zr}{\operatorname{\mathbf{zr}}}
\renewcommand{\ge}{\geqslant}
\renewcommand{\le}{\leqslant}
\def\namedlabel#1#2{\begingroup
   \def\@currentlabel{{\sffamily #2}}%
   \label{#1}\endgroup
}
\newcounter{dummy}
\title{On conjectures by Csordas, Charalambides and Waleffe%
    \thanks{This work was financially supported by the European Research Council under the
        European Union's Seventh Framework Programme (FP7/2007--2013)/ERC grant agreement no.
        259173.}}
\author{\fontfamily{PTSansCaption-TLF}\selectfont\large Alexander Dyachenko \and \fontfamily{PTSansCaption-TLF}\selectfont\large Galina van Bevern}
\date{\fontfamily{PTSansCaption-TLF}\selectfont\large \vskip .3em 12th June 2015\vskip -2em}
\begin{document}
\selectlanguage{british}
\maketitle
\begin{abstract}
    In the present note we obtain new results on two conjectures by Csordas et al. regarding the
    interlacing property of zeros of special polynomials. These polynomials came from the Jacobi
    tau methods for the Sturm-Liouville eigenvalue problem. Their coefficients are the
    successive even derivatives of the Jacobi polynomials~$P_n^{(\alpha,\beta)}$ evaluated at
    the point one. The first conjecture states that the polynomials constructed from
    $P_n^{(\alpha,\beta)}$ and $P_{n-1}^{(\alpha,\beta)}$ are interlacing when $-1<\alpha<1$ and
    $-1<\beta$. We prove it in a range of parameters wider than that given earlier by
    Charalambides and Waleffe. We also show that within narrower bounds another conjecture
    holds. It asserts that the polynomials constructed from $P_n^{(\alpha,\beta)}$ and
    $P_{n-2}^{(\alpha,\beta)}$ are also interlacing.
\end{abstract}

{\raggedright \fontsize{9pt}{10pt} \fontfamily{PTSansCaption-TLF}\selectfont
    \textbf{Keywords:} \ Jacobi polynomials  $\cdot$ Interlacing zeros
    $\cdot$ Tau methods $\cdot$ Hurwitz stability $\cdot$ Hermite-Biehler theorem\\
    \textbf{Mathematics Subject Classification (2010):} \ 33C45 $\cdot$ 26C10 $\cdot$ 30C15}

\newtheoremstyle{mytheoremstyle}{4pt}{4pt}{\itshape}{}{\bfseries\sffamily}{.}{.5em}{}
\newtheoremstyle{mydefinitionstyle}{4pt}{4pt}{}{}{\bfseries\sffamily}{.}{.5em}{}
\newtheoremstyle{citing}{4pt}{4pt}{\itshape}{}{\bfseries\sffamily}{.}{.5em}{\thmnote{#3}}

\theoremstyle{citing}
\newtheorem*{theorem*}{}
\theoremstyle{mytheoremstyle}
\newtheorem{theorem}{Theorem}
\newtheorem{lemma}[theorem]{Lemma}
\newtheorem{corollary}[theorem]{Corollary}

\newtheorem{con}{Conjecture}

\theoremstyle{mydefinitionstyle}
\newtheorem*{rem}{Remark}
\newtheorem{ob}{Observation}
\newtheorem{ex}{Example}
\newtheorem*{definition}{Definition}

\renewcommand{\Re}{\operatorname{Re}}
\renewcommand{\Im}{\operatorname{Im}}

\section{Introduction}
This study is devoted to properties of zeros of polynomials which originate from mathematical
physics. The orthogonal polynomials proved to be a very helpful tool for the discretization of
linear differential operators. The main feature of the tau methods is the adoption of a
polynomial basis which does not automatically satisfy the boundary conditions. This induces a
problem at the boundary (i.e. the points~$\pm1$ in the Jacobi case). The family of polynomials
studied here is connected this way to the eigenproblem $u''(x)=\lambda u(x)$ on the
interval~$x\in(-1,1)$ with various homogeneous boundary conditions (for the details
see~\cite{Cs,CW}). We place our main emphasis on the analytic properties of the considered
family itself, leaving aside the corresponding properties of the original differential
operators. More information on the tau methods can be found in e.g.~\cite[\S10.4.2]{Ca}.

The \emph{Jacobi polynomials} (see their definition and basic properties in
e.g.~\cite[Ch.~IV]{Sz})
\[P_{n}^{(\alpha,\beta)}(x)
=\binom {n+\alpha}{ n}\; {}_{2\!}F_{1}
\left [ \begin{matrix} -n,&n+\alpha+\beta+1\\ \alpha+1 \end{matrix}; \frac{1-x}{2} \right],
\quad
n=1,2,\dots
\]
appear regularly in applications as classical orthogonal polynomials. They are more general than
those of Chebyshev, Legendre and Gegenbauer. The Jacobi polynomials are orthogonal with respect
to the measure~$w_{\alpha,\beta}(x)=(1-x)^\alpha (1+x)^\beta$ on the interval~$(-1,1)$ whenever
both the parameters~$\alpha$ and $\beta$ are greater than~$-1$:
\[
\int_{-1}^1 P^{(\alpha,\beta)}_n(x)P^{(\alpha,\beta)}_k(x)w_{\alpha,\beta}(x)\,dx = 0
\quad\text{if}\quad k\ne n.
\]
The usual normalization supposes that
$P_{n}^{(\alpha,\beta)}(1)=\binom {n+\alpha}{ n}=\frac{(\alpha+1)_{n}}{n!}$, where we applied
the so-called \emph{Pochhammer symbol} or the \emph{rising factorial} defined as
\[(\alpha+1)_n \colonequals (\alpha+1)\cdot(\alpha+2)\cdots(\alpha+n).\]
%\setstretch{1.1}
In this notation we have
\begin{equation}\label{eq:def_Pn}
    \ P_{n}^{(\alpha,\beta)}(x)
    =\frac{(\alpha+1)_n}{n!}
        \sum_{k=0}^{\infty}\frac{(-n)_k(n+\alpha+\beta+1)_k}{k!\, (\alpha+1)_k}
        \left(\frac{1-x}{2}\right)^k,
    \quad
    n=1,2,\dots.
\end{equation}

\begin{definition} [{see \cite[p.~17]{Ch}}]
    % We say that the zeros of the polynomials $g(x)$ and $h(x)$ \emph{interlace non-strictly}
    % if all zeros are real and between each two consecutive zeros of $g(x)$ there is one zero of
    % the polynomial $h(x)$ and vice versa. (Each multiple zero is taken as many times as its
    % multiplicity.)

    % We say that the zeros of the polynomials $g(x)$ and $h(x)$ \emph{interlace strictly} or
    % briefly \emph{interlace} if they interlace non-strictly and $g(x)$ has no common zeros
    % with $h(x)$ (i.e. they are \emph{coprime}). Note that in this case~$g(x)$ and~$h(x)$
    % automatically have only simple zeros.
    We say that the zeros of the polynomials $g(x)$ and $h(x)$ \emph{interlace} (or
    \emph{interlace strictly}) if the following conditions hold simultaneously:
    \begin{compactitem}
    \item all zeros of~$g(x)$ and~$h(x)$ are simple, real and distinct (i.e. the polynomials are
        coprime),
    \item between each two consecutive zeros of~$g(x)$ there is exactly one zero of the
        polynomial~$h(x)$, and
    \item between each two consecutive zeros of~$h(x)$ there is exactly one zero of the
        polynomial~$g(x)$.
    \end{compactitem}\vspace{4pt}
    
    We say that the zeros of the polynomials~$g(x)$ and~$h(x)$ \emph{interlace non-strictly} if
    their zeros are real and become strictly interlacing after dividing both polynomials by the
    greatest common divisor $\gcd(g,h)$. Roughly speaking, the zeros of two polynomials
    interlace non-strictly if they can meet but never pass through each other when changing
    continuously from a strictly interlacing state.
\end{definition}
\begin{definition}{\refstepcounter{dummy}\label{def:real_pair}}
    A pair $\left(g(x),h(x)\right)$ is called \emph{real} if for any real numbers~$A,B$ the
    combination~$Ag(x)+Bh(x)$ has only real zeros. This is equivalent to the non-strict
    interlacing property of~$g(x)$ and~$h(x)$, which is shown in e.g.~\cite[Chapter~I]{Ch}.
\end{definition}
\begin{rem}
    The phrases \emph{``$g(x)$ and $h(x)$ interlace''},
    \emph{``$g(x)$ and $h(x)$ possess the interlacing property''},
    \emph{``$g(x)$ interlaces $h(x)$\/''},
    \emph{``$g(x)$ and $h(x)$ have interlacing zeros''} and
    \emph{``the zeros of $g(x)$ and $h(x)$ are interlacing''}
    we use synonymously.
\end{rem}

It is well-known that the orthogonal polynomials on the real line have real interlacing zeros
(due to the so-called three-term recurrence; see e.g.~\cite[pp.~42--47, Sections~3.2--3.3]{Sz}).
That is, in particular, the zeros of $P_n^{(\alpha,\beta)}$ and $P_{n-1}^{(\alpha,\beta)}$
interlace for all natural~$n$. In the present note we study zeros of polynomials that do not
satisfy the three-term recurrence. More specifically, we consider
\begin{equation}\label{eq:phi_expl}
    \phi_{n}^{(\alpha,\beta)}(\mu)
    \colonequals\sum_{k=0}^{\left[ n/2 \right]}
    \left. \frac{d^{2k}}{dx^{2k}} P_{n}^{(\alpha,\beta)}(x) \right|_{x=1} \cdot \mu^k
    =\frac{(\alpha+1)_n}{n!}
    \sum_{k=0}^{\left[ n/2 \right]}
    \frac{(-n)_{2k}(n+\alpha+\beta+1)_{2k}}{(\alpha+1)_{2k}}
    \left(\frac{\mu}{4}\right)^k,
    \quad
    n=1,2,\dots,
\end{equation}
where the notation~$[a]$ stands for the integer part of the number~$a$.
\begin{theorem*}[Theorem CCW \textnormal{\sffamily(Csordas, Charalambides and Waleffe~\cite{Cs})}]
    \refstepcounter{dummy}\namedlabel{th:Cs}{CCW}
    For every positive integer $n \geqslant 2$ the polynomial $\phi_n^{(\alpha,\beta)}(\mu)$,
    $-1 <\alpha < 1$, $-1<\beta$, has only real negative zeros.
\end{theorem*}
The proof of this theorem given in~\cite{Cs} rests on the Hermite-Biehler theory (see
Theorem~\ref{th:H-B} herein).

\begin{rem}[to Theorem~\ref{th:Cs}]
    In fact, the authors have shown that $\phi_n^{(\alpha,\beta)}(\mu)$ interlaces~$\phi_{n-1}^{(\alpha+1,\beta+1)}(\mu)$. As a result,
    \emph{the theorem remains valid when $1\leqslant\alpha<2$ and~$0<\beta$}. Note that the
    interlacing property here is strict, so it implies the simplicity of the zeros. Furthermore,
    \emph{the polynomial $\phi_n^{(\alpha,\beta)}(\mu)$ has only simple negative zeros for
        $-1<\alpha<0$ and~$-2<\beta$,}
    as well. This follows as a straightforward consequence of Theorems~\ref{th:CWst} and
    Lemma~\ref{lem:main} of the present study.
\end{rem}

Based on Theorem~\ref{th:Cs} the authors of~\cite{Cs} conjectured that these polynomials also
have the following property.
\begin{theorem*}[Conjecture A \textnormal{\sffamily({\cite[p.~3559]{Cs}})}]
    \refstepcounter{dummy}\namedlabel{con:main}{A}
    For $-1<\alpha<1$, $-1<\beta$ and $n \geqslant 4$ the zeros of the polynomials
    $\phi_{n}^{(\alpha,\beta)}$ and $\phi_{n-1}^{(\alpha,\beta)}$ interlace.
\end{theorem*}
In particular, this assertion would imply that the spectra of polynomial approximations to the
corresponding differential operator are negative and simple (see~\cite{CW}). In~\cite{CW},
Conjecture~\ref{con:main} was proved for $-1<\alpha,\beta<0$ and $0<\alpha,\beta<1$: see
Theorem~\ref{th:CW} below. In fact, the upper bound on~$\beta$ is redundant.
Theorem~\ref{th:CWst} with a shorter proof states that the conjecture holds true for
\begin{equation}\label{eq:cond_conjA}
    -1<\alpha<0,\ -1<\beta\quad\text{or}\quad
    0\leqslant\alpha<1,\ 0<\beta\quad\text{or}\quad
    1\leqslant\alpha<2,\ 1<\beta.
\end{equation}

Additionally, we study another assertion about the same polynomials.
\begin{theorem*}[Conjecture B \textnormal{\sffamily({\cite[p.~3559]{Cs}})}]
    \refstepcounter{dummy}\namedlabel{con:main_2}{B}
    For $\phi_n^{(\alpha,\beta)}(\mu)$ as in Theorem \ref{th:Cs} and for all $n\geqslant 5$, the zeros
    of the polynomials $\phi_n^{(\alpha,\beta)}(\mu)$ and $\phi_{n-2}^{(\alpha,\beta)}(\mu)$
    interlace.
\end{theorem*}
\setstretch{1.15}
Originally, this conjecture was stated for $-1<\alpha<1$ and~$\beta>-1$. However, numerical
calculations show that it fails for some values satisfying $-1<\beta<0<\alpha<1$. Our (partial)
solution to Conjecture~\ref{con:main_2} is given in Theorem~\ref{thm:cb_main_2}: it holds true
for~\(-1<\alpha<0<\beta\) \ or~~\(0<\alpha<1<\beta\). We approach by extending the idea
of~\cite{CW} to another pair of auxiliary polynomials. Certainly, there exists a relation
between Conjecture~\ref{con:main} and Conjecture~\ref{con:main_2} as discussed in
Section~\ref{sec:concl-relat-conj}.

Vieta's formulae imply that the sum of all zeros of~$\phi_n^{(\alpha,\beta)}(\mu)$ tends to
$-1/2$ for even~$n$ and to~$-1/6$ for odd~$n$ as~$n\to\infty$. Thus, the assertion of
Conjecture~\ref{con:main_2} gives that the zero points of $\phi_n^{(\alpha,\beta)}(\mu)$
converge monotonically in~$n$ outside of any fixed interval containing the origin. If the
assertions of both conjectures hold, then the
fraction~$\phi_{2n-1}^{(\alpha,\beta)}(\mu)/\phi_{2n}^{(\alpha,\beta)}(\mu)$ maps the upper half
of the complex plane into itself and converges to a function meromorphic outside of any disk
centred at the origin. This situation resembles how the quotients of orthogonal polynomials of
the first and second kinds behave.
% Polynomials of the first and second type: see page 8 in Akhiezer, N. I. The classical moment
% problem and some related questions in analysis. 1965

Section~\ref{sec:some-polyn-relat} of the present paper introduces connections between
polynomials~$\phi_n^{(\alpha,\beta)}(\mu)$ with different~$n$, $\alpha$ and~$\beta$. These
connections allow us to extend and clarify the result~\cite{CW} in
Section~\ref{sec:results-conj-refc} (see Theorem~\ref{th:CWst}). We show that
Conjecture~\ref{con:main} holds true under the conditions~\eqref{eq:cond_conjA}.
Section~\ref{sec:results-conj-refc-B} contains the proof of Conjecture~\ref{con:main_2} for
\(-1<\alpha<0<\beta\) and \(0<\alpha<1<\beta\) (see Theorem~\ref{thm:cb_main_2}). In the last
section we show that the studied conjectures are actually related.
% In particular, Theorem~\ref{th:CWst}
% can be alternatively obtained as a consequence of our results on Conjecture~\ref{con:main_2}.

\section{Basic relations between the polynomials~\texorpdfstring{$\phi_n^{(\alpha,\beta)}$}{\textphi\_n(\textalpha,\textbeta)} for various \texorpdfstring{$\alpha$}{\textalpha}
    and~\texorpdfstring{$\beta$}{\textbeta}}\label{sec:some-polyn-relat}
Being connected with the Jacobi polynomials, the family
$\left(\phi_n^{(\alpha,\beta)}\right)_n$, where $n=2,3,\dots$, inherits some of their
properties. The formulae induced by the corresponding relations for the Jacobi case include (we
omit the argument $\mu$ of $\phi_n^{(\alpha,\beta)}$ for brevity's sake):
\begin{align}
    (2n+\alpha+\beta) \phi_{n}^{(\alpha,\beta-1)}
    &=(n+\alpha+\beta)\phi_n^{(\alpha,\beta)}+(n+\alpha)\phi_{n-1}^{(\alpha,\beta)}\label{eq:fl_summ1}
    ,\\
    (2n+\alpha+\beta) \phi_{n}^{(\alpha-1,\beta)}
    &=(n+\alpha+\beta)\phi_{n}^{(\alpha,\beta)}-(n+\beta)\phi_{n-1}^{(\alpha,\beta)}\label{eq:fl_summ2}
    ,\\
    (n+\alpha+\beta)\phi_{n}^{(\alpha,\beta)}
    &\xlongequal{\eqref{eq:fl_summ1}+\eqref{eq:fl_summ2}}(n+\beta) \phi_{n}^{(\alpha,\beta-1)}+(n+\alpha)\phi_{n}^{(\alpha-1,\beta)}\label{eq:fl_summ3}
    ,\\
    \phi_{n-1}^{(\alpha,\beta)}
    &\xlongequal{\eqref{eq:fl_summ1}-\eqref{eq:fl_summ2}}
    \phi_{n}^{(\alpha,\beta-1)}-\phi_{n}^{(\alpha-1,\beta)}\label{eq:fl_summ4}
    .
\end{align}
The latter two identities contain labels of equations above the equality sign: we use the
convention that this explains how the equalities can be obtained (up to some proper
coefficients). For example,~$2n+\alpha+\beta$ times~\eqref{eq:fl_summ3} is the sum of~$n+\beta$
times~\eqref{eq:fl_summ1} and~$n+\alpha$ times~\eqref{eq:fl_summ2}, whereas~$2n+\alpha+\beta$
times~\eqref{eq:fl_summ4} is the difference~$\eqref{eq:fl_summ1}-\eqref{eq:fl_summ2}$. The
identities~\eqref{eq:fl_summ1} and~\eqref{eq:fl_summ2} can be checked by applying the formulae
(see, e.g.,~\cite[p.~737]{Pr})
\begin{align}
    (2n+\alpha+\beta) P_{n}^{(\alpha,\beta-1)}(x)
    &=(n+\alpha+\beta)P_n^{(\alpha,\beta)}(x)+(n+\alpha)P_{n-1}^{(\alpha,\beta)}(x)
    \label{eq:fl_P_summ1}
    ,\\
    (2n+\alpha+\beta) P_{n}^{(\alpha-1,\beta)}(x)
    &=(n+\alpha+\beta)P_{n}^{(\alpha,\beta)}(x)-(n+\beta)P_{n-1}^{(\alpha,\beta)}(x)
    \label{eq:fl_P_summ2}
    ,
\end{align}
respectively, to the left-hand side of the definition~\eqref{eq:phi_expl}. The relations
involving derivatives (not surprisingly) differ from those for the Jacobi polynomials:
\begin{align}
    (n+\alpha)\phi_{n-1}^{(\alpha,\beta+1)}
    &=n \phi_n^{(\alpha,\beta)} - 2\mu \left(\phi_n^{(\alpha,\beta)}\right)'\label{eq:fl_diff1}
    ,\\
    (n+\alpha+\beta+1)\phi_{n}^{(\alpha,\beta+1)}
    &\xlongequal{\eqref{eq:fl_diff1}\text{ and }\eqref{eq:fl_summ1}}
    (n+\alpha+\beta+1)\phi_n^{(\alpha,\beta)} + 2\mu \left(\phi_n^{(\alpha,\beta)}\right)'\label{eq:fl_diff2}
    ,\\
    (n+\alpha)\phi_{n}^{(\alpha-1,\beta+1)}
    &\xlongequal{\eqref{eq:fl_summ2},\text{ then }\eqref{eq:fl_diff2}-\eqref{eq:fl_diff1}}
    \alpha \phi_n^{(\alpha,\beta)} + 2\mu \left(\phi_n^{(\alpha,\beta)}\right)'\label{eq:fl_diff3}
    ,\\
    \frac{n+\beta}{n+\alpha+\beta} 2\mu\left(\phi_{n}^{(\alpha,\beta-1)}\right)'
    &\xlongequal{\eqref{eq:fl_diff2}\text{ and }\eqref{eq:fl_summ3}}
    (n+\alpha)\phi_{n}^{(\alpha-1,\beta)}-\alpha\phi_{n}^{(\alpha,\beta)}\label{eq:fl_diff4}
    ,\\
    2\mu \left( \phi_n^{(\alpha,\beta-1)}\right)'
    &\xlongequal{\eqref{eq:fl_diff1}\text{ and }\eqref{eq:fl_summ4}}
    n\phi_n^{(\alpha-1,\beta)}-\alpha\phi_{n-1}^{(\alpha,\beta)}\label{eq:fl_diff5}
    .
\end{align}
So we see that the presented identities are not independent in the sense that they can be
obtained as combinations of others with various $\alpha$ and $\beta$. We derive the
formula~\eqref{eq:fl_diff1} with the help of the right-hand side of~\eqref{eq:phi_expl}.
\[
\begin{aligned}
    n \phi_n^{(\alpha,\beta)} - 2\mu \left(\phi_n^{(\alpha,\beta)}\right)'
    ={}&\frac{(\alpha+1)_n}{n!} \sum_{k=0}^{\left[ n/2 \right]}
        \frac{(-n)_{2k}(n+\alpha+\beta+1)_{2k}}{(\alpha+1)_{2k}}
        \left(\frac{\mu}{4}\right)^k \left(n-2k\right)\\
    ={}&(n+\alpha)\frac{(\alpha+1)_{n-1}}{(n-1)!} \sum_{k=0}^{\left[ n/2 \right]}
        (-n+2k)\frac{(-n)(-n+1)\cdots(-n+2k-1)(n+\alpha+\beta+1)_{2k}}{(-n)(\alpha+1)_{2k}}
        \left(\frac{\mu}{4}\right)^k\\
    ={}&(n+\alpha)\phi_{n-1}^{(\alpha,\beta+1)}.
\end{aligned}
\]
%\setstretch{1.1}
Certainly, we can combine formulae further obtaining e.g.
\begin{gather}    
    n \phi_{n}^{(\alpha,\beta)}-2\mu\left(\phi_n^{(\alpha,\beta)}\right)'
    \xlongequal{\eqref{eq:fl_diff1}} (n+\alpha)\phi_{n-1}^{(\alpha,\beta+1)}
    \xlongequal{\eqref{eq:fl_diff2}} (n+\alpha)\phi_{n-1}^{(\alpha,\beta)}
      +\frac{n+\alpha}{n+\alpha+\beta}2\mu \left(\phi_{n-1}^{(\alpha,\beta)}\right)'\label{eq:chain_rel_1}
    ,\\
    n\phi_n^{(\alpha,\beta)}-(n+\alpha)\phi_{n-1}^{(\alpha,\beta)}
    \xlongequal{\eqref{eq:fl_diff2}-\eqref{eq:fl_diff1}}\
    \frac{2n+\alpha+\beta}{n+\alpha+\beta}2\mu\left( \phi_n^{(\alpha,\beta-1)}\right)'
    \xlongequal{\frac{d}{d\mu}\eqref{eq:fl_summ1}}2\mu
    \left(\phi_n^{(\alpha,\beta)}\right)'
      +\frac{n+\alpha}{n+\alpha+\beta}2\mu\left(\phi_{n-1}^{(\alpha,\beta)} \right)'
    .\label{eq:chain_rel_2}
\end{gather}
The key role further plays the following combination of~\eqref{eq:fl_diff1}
and~\eqref{eq:fl_diff2}, which is valid for an arbitrary real~$A$,
\begin{equation} \label{eq:main_rel}
    \frac{n+\alpha+\beta}{n+\alpha}\phi_n^{(\alpha,\beta)}+A \phi_{n-1}^{(\alpha,\beta)}
    =\frac{(1+A)n+\alpha+\beta}{n+\alpha} \phi_n^{(\alpha,\beta-1)}
      +2\mu\frac{1-A}{n+\alpha}\left( \phi_n^{(\alpha,\beta-1)}\right)'.
\end{equation}
The next identity stands apart and can be checked explicitly with the help
of~\eqref{eq:phi_expl}
\begin{equation*}
    \phi_n^{(\alpha,\beta)}(\mu)-\phi_n^{(\alpha,\beta)}(0)
    =\tfrac 14 (n+\alpha+\beta+1)_2 \cdot\mu \phi_{n-2}^{(\alpha+2,\beta+2)}(\mu).
\end{equation*}
It reflects the standard formula for the derivative of the Jacobi polynomial
(e.g.~\cite[p.~737]{Pr}):
\begin{equation} \label{eq:diff_P}
\left(P_n^{(\alpha,\beta)}(x)\right)^{(m)}=2^{-m} (n+\alpha+\beta+1)_m\,P_{n-m}^{(\alpha+m,\beta+m)}(x).
\end{equation}

\begin{rem}
    Note that the equalities~\eqref{eq:fl_summ1}--\eqref{eq:diff_P} are of formal nature, and
    therefore their validity requires no orthogonality from the Jacobi polynomials. That is,
    these equalities holds true if all coefficients are defined, not only for $\alpha,\beta>-1$.
\end{rem}
\begin{rem}
    A polynomial with only real zeros interlaces its derivative by Rolle's theorem.%
    \footnote{Non-strictly whenever the polynomial has a multiple zero.} Consequently, they both
    interlace any real combination of them.%
    \footnote{See the definition of a real pair on the page~\pageref{def:real_pair}.} So if in
    one of the formulae \eqref{eq:fl_diff1}--\eqref{eq:fl_diff3} the first term on the
    right-hand side has only real zeros, then all three involved polynomials are pairwise
    interlacing. For example, if $\phi_n^{(\alpha,\beta)}$ has only real zeros,
    then~$\phi_{n-1}^{(\alpha,\beta+1)}$, $\phi_n^{(\alpha,\beta)}$ and
    $2\mu \left(\phi_n^{(\alpha,\beta)}\right)'$ are pairwise interlacing which is provided
    by~\eqref{eq:fl_diff1}.
\end{rem}
\begin{rem}
    The identities~\eqref{eq:fl_summ3} and \eqref{eq:fl_summ4} show that the interlacing
    property of~$\phi_n^{(\alpha,\beta)}(\mu)$ and~$\phi_{n-1}^{(\alpha,\beta)}(\mu)$ can also
    be expressed as the interlacing property of~$\phi_n^{(\alpha,\beta-1)}(\mu)$
    and~$\phi_{n}^{(\alpha-1,\beta)}(\mu)$. Analogously, from the relations~\eqref{eq:fl_diff4}
    and~\eqref{eq:fl_diff5} it can be seen that this is also equivalent to the interlacing
    property of~$\left(\phi_{n}^{(\alpha,\beta-1)}(\mu)\right)'$
    and~$\phi_{n}^{(\alpha-1,\beta)}(\mu)$.
\end{rem}
\begin{lemma}
    If $-1<\alpha<1$, $-1<\beta$ or if~$1\leqslant\alpha<2$, $0<\beta$, then the polynomials
    $\left(\phi_n^{(\alpha,\beta)}(\mu)\right)'$, \
    $\left(\phi_{n-1}^{(\alpha,\beta)}(\mu)\right)'$ and
    $\left(\phi_{n}^{(\alpha,\beta-1)}(\mu)\right)'$ are pairwise interlacing.
\end{lemma}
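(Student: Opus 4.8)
The plan is to push everything into a single two–dimensional pencil of polynomials and then exhibit one real pair inside it. Differentiating \eqref{eq:fl_summ1} gives
$(2n+\alpha+\beta)\bigl(\phi_n^{(\alpha,\beta-1)}\bigr)'=(n+\alpha+\beta)\bigl(\phi_n^{(\alpha,\beta)}\bigr)'+(n+\alpha)\bigl(\phi_{n-1}^{(\alpha,\beta)}\bigr)'$,
so $\bigl(\phi_n^{(\alpha,\beta-1)}\bigr)'$ is a combination of the other two derivatives with strictly positive coefficients. Hence it is enough to prove that $\bigl(\phi_n^{(\alpha,\beta)}\bigr)'$ and $\bigl(\phi_{n-1}^{(\alpha,\beta)}\bigr)'$ form a real pair: if they do, then every non‑trivial real combination of them forms a real pair with each of the two (the Wronskian obeys $W\bigl(f,\,af+bg\bigr)=b\,W(f,g)$, hence keeps a constant sign, while $f$ is real‑rooted), and all three polynomials become pairwise interlacing; the interlacing is strict because in the stated ranges $\phi_n^{(\alpha,\beta)}$ and $\phi_{n-1}^{(\alpha,\beta)}$ have only simple negative zeros (Theorem~\ref{th:Cs} and the Remark after it), so the same is true of their derivatives. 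For $n\le 5$ the three derivatives are at most linear and the statement is immediate, so one may assume $n\ge 6$.

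The second ingredient is that differentiation preserves real pairs: if every $Af+Bg$ has only real zeros, then so does $Af'+Bg'=(Af+Bg)'$, being the derivative of a real‑rooted polynomial. Now \eqref{eq:fl_diff1} and \eqref{eq:fl_diff2}, read with $\beta$ replaced by $\beta-1$, show that $\phi_{n-1}^{(\alpha,\beta)}$ and $\phi_n^{(\alpha,\beta)}$ both lie in the pencil spanned by $\phi_n^{(\alpha,\beta-1)}$ and $\mu\bigl(\phi_n^{(\alpha,\beta-1)}\bigr)'$, and (these two being linearly independent) span it; differentiating term‑by‑term, $\bigl\langle\bigl(\phi_n^{(\alpha,\beta)}\bigr)',\bigl(\phi_{n-1}^{(\alpha,\beta)}\bigr)'\bigr\rangle=\bigl\langle\bigl(\phi_n^{(\alpha,\beta-1)}\bigr)',\mu\bigl(\phi_n^{(\alpha,\beta-1)}\bigr)''\bigr\rangle$ — the same conclusion is reached by differentiating the master identity \eqref{eq:main_rel}. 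Assume now that $\phi_n^{(\alpha,\beta-1)}$ has only negative zeros. Then $\bigl(\phi_n^{(\alpha,\beta-1)}\bigr)'$ has only negative zeros, and the zeros of $\mu\bigl(\phi_n^{(\alpha,\beta-1)}\bigr)''$ — namely $0$ together with the negative, interlacing zeros of $\bigl(\phi_n^{(\alpha,\beta-1)}\bigr)''$ — interlace them, since $0$ sits to the right of all of them; so $\bigl(\phi_n^{(\alpha,\beta-1)}\bigr)'$ and $\mu\bigl(\phi_n^{(\alpha,\beta-1)}\bigr)''$ form a real pair. Consequently the whole pencil is real, in particular $\bigl(\phi_n^{(\alpha,\beta)}\bigr)'$ and $\bigl(\phi_{n-1}^{(\alpha,\beta)}\bigr)'$ form a real pair, which proves the lemma in this case.

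It remains to guarantee that $\phi_n^{(\alpha,\beta-1)}$ has only negative zeros on the stated ranges. By Theorem~\ref{th:Cs} this is so whenever $-1<\alpha<1$ and $\beta>0$, and by the Remark after it also whenever $-1<\alpha<0$ (then $\beta-1>-2$); this already covers everything except the sliver with $\alpha\ge 0$ and $\beta\le 0$. On that sliver one has instead that $\phi_n^{(\alpha-1,\beta)}$ is real‑rooted (its first parameter $\alpha-1$ lies in $(-1,1)$), and I would run the companion argument starting from \eqref{eq:fl_summ2}, \eqref{eq:fl_summ4} and \eqref{eq:fl_diff3}--\eqref{eq:fl_diff5}, with the roles of the two parameters interchanged; the single line $\alpha=0$ then follows by continuity of the zeros from the region $-1<\alpha<0$, together with the simplicity of the zeros of $\phi_n^{(0,\beta)}$ and $\phi_{n-1}^{(0,\beta)}$. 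I expect this last paragraph to be where the real work lies: the real‑pair/pencil machinery is soft, but one must track carefully which auxiliary polynomial is known to be real‑rooted in each subregion and check that the companion identities actually generate the pencil $\bigl\langle\bigl(\phi_n^{(\alpha,\beta)}\bigr)',\bigl(\phi_{n-1}^{(\alpha,\beta)}\bigr)'\bigr\rangle$ there — and, if that fails, fall back on a Hermite–Biehler argument in the spirit of Theorem~\ref{th:Cs}.
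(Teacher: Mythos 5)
Your reduction (differentiate \eqref{eq:fl_summ1} so that $\bigl(\phi_n^{(\alpha,\beta-1)}\bigr)'$ is a positive combination of the other two derivatives, then show $\bigl(\phi_n^{(\alpha,\beta)}\bigr)'$ and $\bigl(\phi_{n-1}^{(\alpha,\beta)}\bigr)'$ form a real pair via the pencil spanned by $\phi_n^{(\alpha,\beta-1)}$ and $\mu\bigl(\phi_n^{(\alpha,\beta-1)}\bigr)'$) is sound \emph{wherever $\phi_n^{(\alpha,\beta-1)}$ is known to be real-rooted}. But that is exactly the sticking point, and you have correctly located it without resolving it. By Lemma~\ref{lem:main}, real-rootedness of $\phi_n^{(\alpha,\beta-1)}$ is \emph{equivalent} to the interlacing of $\phi_n^{(\alpha,\beta)}$ and $\phi_{n-1}^{(\alpha,\beta)}$, i.e.\ to Conjecture~\ref{con:main} at $(\alpha,\beta)$. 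On the sliver $0\le\alpha<1$, $-1<\beta\le 0$ (and likewise on $1\le\alpha<2$, $0<\beta\le 1$, which you do not address at all — there the remark to Theorem~\ref{th:Cs} needs the second parameter to exceed $0$, so $\phi_n^{(\alpha,\beta-1)}$ is again not covered) this is precisely the part of Conjecture~\ref{con:main} that remains open in the paper. So your main line of argument there would assume the open conjecture. The proposed ``companion argument'' does not rescue it: the relations \eqref{eq:fl_diff4}--\eqref{eq:fl_diff5} express $\alpha\phi_n^{(\alpha,\beta)}$ and $\alpha\phi_{n-1}^{(\alpha,\beta)}$ through $\phi_n^{(\alpha-1,\beta)}$ and $\mu\bigl(\phi_n^{(\alpha,\beta-1)}\bigr)'$ — two \emph{unrelated} polynomials, not a polynomial and its own derivative — so real-rootedness of $\phi_n^{(\alpha-1,\beta)}$ alone does not make that pencil real; moreover these identities degenerate at $\alpha=0$. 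The evaluation at $x=1$ breaks the $\alpha\leftrightarrow\beta$ symmetry, so there is no clean $\alpha$-analogue of \eqref{eq:fl_diff1}--\eqref{eq:fl_diff2} to run your argument on.

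The paper avoids this trap by shifting $\beta$ \emph{up} rather than down: it only needs Theorem~\ref{th:Cs} for $\phi_n^{(\alpha,\beta)}$, $\phi_{n-1}^{(\alpha,\beta)}$ and $\phi_{n-1}^{(\alpha,\beta+1)}$ (all inside the admissible range), uses \eqref{eq:fl_diff1}--\eqref{eq:fl_diff2} to show $\phi_{n-1}^{(\alpha,\beta+1)}$ interlaces both $\mu\phi_{n-1}^{(\alpha,\beta)}$ and $\phi_n^{(\alpha,\beta)}$, and then a sign-change count on the left-hand side of \eqref{eq:chain_rel_2} to conclude that $\mu\bigl(\phi_n^{(\alpha,\beta-1)}\bigr)'$ interlaces $\phi_{n-1}^{(\alpha,\beta+1)}$ — no real-rootedness of $\phi_n^{(\alpha,\beta-1)}$ itself is ever invoked. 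Differentiating \eqref{eq:chain_rel_2} then yields two interlacing combinations whose sum and difference are the two derivatives in question. If you want to salvage your write-up, the missing idea is this upward shift together with the sign-counting step; as it stands, the final paragraph is a genuine gap, not a routine verification.
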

\begin{proof}
    By Theorem~\ref{th:Cs} the polynomials~$\phi_n^{(\alpha,\beta)}(\mu)$,
    $\phi_{n-1}^{(\alpha,\beta)}(\mu)$ have only (simple) negative zeros. Then the
    relation~\eqref{eq:fl_diff1} shows that the polynomial~$\phi_{n-1}^{(\alpha,\beta+1)}(\mu)$
    with negative zeros interlaces (strictly) both~$\phi_n^{(\alpha,\beta)}(\mu)$
    and~$\mu\left(\phi_n^{(\alpha,\beta)}(\mu)\right)'$.  Moreover, the sign
    of~$\phi_{n-1}^{(\alpha,\beta+1)}(\mu)$ at the origin and at the rightmost zero
    of~$\phi_n^{(\alpha,\beta)}(\mu)$ is the same, and
    therefore~$\left(\phi_n^{(\alpha,\beta)}(\mu),\mu\phi_{n-1}^{(\alpha,\beta+1)}(\mu)\right)$
    is a real coprime pair. A similar consideration of the relation~\eqref{eq:fl_diff2} gives
    that the pair
    $ \left(\mu\phi_{n-1}^{(\alpha,\beta)}(\mu),\phi_{n-1}^{(\alpha,\beta+1)}(\mu)\right) $ is
    also real and coprime.

    In particular, each interval between two consequent zeros
    of~$\phi_{n-1}^{(\alpha,\beta+1)}(\mu)$ contains exactly one zero
    of~$\phi_n^{(\alpha,\beta)}(\mu)$ as well as of~$\phi_{n-1}^{(\alpha,\beta)}(\mu)$. Taking
    the signs of the last two polynomials at the ends of these intervals into account shows that
    the difference in the left-hand side of~\eqref{eq:chain_rel_2}, and
    hence~$\left(\phi_{n}^{(\alpha,\beta-1)}(\mu)\right)'$, changes its sign between consecutive
    zeros of~$\phi_{n-1}^{(\alpha,\beta+1)}(\mu)$. Since
    $\deg \left(\phi_{n}^{(\alpha,\beta-1)}\right)' \leqslant \deg\phi_{n-1}^{(\alpha,\beta+1)}$, the
    polynomial $\mu\left(\phi_{n}^{(\alpha,\beta-1)}(\mu)\right)'$ necessarily
    interlaces~$\phi_{n-1}^{(\alpha,\beta+1)}(\mu)$. Then the right-hand side
    of~\eqref{eq:chain_rel_2} shows that
    \begin{equation}\label{eq:apdx1}
        \left(\phi_n^{(\alpha,\beta)}(\mu)\right)'
        +\frac{n+\alpha}{n+\alpha+\beta}\left(\phi_{n-1}^{(\alpha,\beta)}(\mu) \right)'
    \end{equation}
    and~$\phi_{n-1}^{(\alpha,\beta+1)}(\mu)$ have interlacing zeros. At the same time, by
    differentiating the equality~\eqref{eq:chain_rel_2} we obtain that
    \begin{equation}\label{eq:apdx2}
        n\left(\phi_n^{(\alpha,\beta)}(\mu)\right)'
        -(n+\alpha)\left(\phi_{n-1}^{(\alpha,\beta)}(\mu) \right)'
    \end{equation}
    is proportional to~$\left(\mu\left(\phi_{n}^{(\alpha,\beta-1)}(\mu)\right)'\right)'$ and,
    hence, interlaces~$\mu\left(\phi_{n}^{(\alpha,\beta-1)}(\mu)\right)'$. Put in other words,
    the polynomials~\eqref{eq:apdx1} and~\eqref{eq:apdx2} are interlacing. With appropriate
    factors, their sum gives~\(\left(\phi_n^{(\alpha,\beta)}(\mu)\right)'\) and their difference
    gives~\(\left(\phi_{n-1}^{(\alpha,\beta)}(\mu) \right)'\). This yields the lemma.
\end{proof}

\section{Results on Conjecture~\texorpdfstring{\ref{con:main}}{A}}\label{sec:results-conj-refc}
\setstretch{1.2}
\begin{theorem*}[Theorem HB
    \textnormal{\sffamily(Hermite-Biehler, for real polynomials)}]
    \refstepcounter{dummy}\namedlabel{th:H-B}{HB}
    A real polynomial \(f (z) \colonequals p(z^2) + zq(z^2)\) is stable%
    \footnote{That is, Hurwitz stable: $f(z)=0\implies \Re z < 0$.} if and only if
    $p(0)\cdot q(0)>0$ and all zeros of~$p(z)$ and~$zq(z)$ are nonpositive and strictly
    interlacing.
\end{theorem*}
This well-known fact plays a crucial role in~\cite{Cs} for proving Theorem~\ref{th:Cs}. Here the
statement of the Hermite-Biehler theorem is expressed closely to the one given
in~\cite[p.~228]{Ga}.
% We impose the coincidence of signs for the free coefficients instead of the equivalent
% condition for the leading ones.
The replacement of~$q(z)$ with~$zq(z)$ provides the desired order of zeros. That is, the zero
of~$p(z)$ and~$q(z)$ closest to the origin belongs to the former polynomial. The more general
statement can be found in e.g.~\cite[p.~21]{Ch}. The present study also uses
Theorem~\ref{th:H-B} as a main tool.

Some bounds on the parameters~$\alpha$ and~$\beta$ are necessary even for Theorem~\ref{th:Cs}
(i.e. are satisfied if all zeros of~$\phi_n^{(\alpha,\beta)}(\mu)$ are negative for
every~$n\geqslant 2$). The restriction~$\alpha>-1$ corresponds to positivity of the coefficients (and
to negativity of all zeros). The parameter~$\alpha$ is bounded from above by~$3.37228\dots$.
Indeed, if the polynomial $\phi_n^{(\alpha,\beta)}(\mu)\equalscolon\sum_{k=0}^{m}b_k\mu^k$,
$m=[n/2]$, has only negative%
\footnote{Here we suppose that~$\phi_n^{(\alpha,\beta)}(\mu)$ has only simple zeros; the case of
    multiple zeros follows by continuity.}
zeros, then by Rolle's theorem, the zeros
of~$p(\mu)\colonequals\mu^m\phi_n^{(\alpha,\beta)}(\mu^{-1})$ and~$p'(\mu)$ are interlacing. So,
Theorem~\ref{th:H-B} then implies that the polynomial $p(\mu^2)+\mu p'(\mu^2)$ is stable.
Therefore, when~$b_0>0$ the Hurwitz criterion (see e.g.~\cite[Chapter~I]{Ch}) gives us the
easy-to-check conditions $b_1>0$ and
\begin{equation}\label{eq:cond_stab}
    \begin{vmatrix}
       mb_0&(m-1)b_1&(m-2)b_2\\
       b_0 & b_1&     b_2\\
       0 & mb_0&(m-1)b_1
     \end{vmatrix}\geqslant 0,
     \quad\text{that is}\quad
     \frac{b_1^2}{b_0b_2}\geqslant\frac{2 m}{m-1}>2,
\end{equation}
which are necessarily true when the polynomial $p(\mu)$ has only real zeros. In our case we have
\[
\frac{b_1^2}{b_0b_2}
=\frac{n(n-1)(\alpha+3)_2(n+\alpha+\beta+1)_2}
      {(n-2)(n-3)(\alpha+1)_2(n+\alpha+\beta+3)_2}
      \xrightarrow{n\to\infty}\frac{(\alpha+3)(\alpha+4)}{(\alpha+1)(\alpha+2)},
\]
so the condition~\eqref{eq:cond_stab} fails to be true (along with the assertion of
Theorem~\ref{th:Cs}) for every~$n$ big enough and every~$\beta$
when~$\alpha>\frac{1+\sqrt{33}}2=3.37228\dots$ or $\alpha<\frac{1-\sqrt{33}}2=-2.37228\dots$.
Computer experiments show that the polynomials~$\phi_n^{(\alpha,\beta)}$ with
\emph{positive coefficients} can have zeros outside the real axis for a (large enough)
negative~$\beta$. So, some lower constraint on the parameter~$\beta$ is also required.

%\setstretch{1.15}
\begin{definition}
    Denote the $i$th zero of a polynomial~$p$ with respect to the distance from the origin
    by~$\zr_i(p)$. Put~$\zr_i(p)$ equal to~$-\infty$ if~$\deg p<i$ and to zero if~$i=0$ (it is
    convenient since all coefficients of the polynomials we deal with are nonnegative).
\end{definition}
\begin{lemma} \label{lem:main} Let $\alpha> -1$ and $n+\alpha+\beta> 0$, $n=4,5,\dots$. The
    zeros of the polynomials $\phi_{n}^{(\alpha,\beta)}$ and $\phi_{n-1}^{(\alpha,\beta)}$ are
    negative and interlace non-strictly (strictly) if and only if the
    polynomial~$\phi_{n}^{(\alpha,\beta-1)}$ has only real zeros (only simple real zeros,
    respectively). Moreover, if~$\phi_{n}^{(\alpha,\beta-1)}$ has only real zeros, then
    $\zr_1\left(\phi_{n-1}^{(\alpha,\beta)}\right)\leqslant\zr_1\left(\phi_{n}^{(\alpha,\beta)}\right)$.
\end{lemma}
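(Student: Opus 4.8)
The plan is to play the identity~\eqref{eq:main_rel} against the Hermite-Biehler theorem, treating the free parameter~$A$ as a dial that runs over all reals. Set $m\colonequals[n/2]$ and, for any polynomial~$p(\mu)=\sum_{k}b_k\mu^k$ of degree at most~$m$, write $\widehat p(\mu)\colonequals\mu^m p(1/\mu)$ for the reversed polynomial. The key observation is the classical equivalence (already used in the discussion preceding the lemma): the zeros of~$p$ are real, negative and simple \emph{and} lie in the correct order relative to the zeros of a companion polynomial~$q$ of degree~$m$ or~$m-1$ precisely when, after reversing, $\widehat p(\mu^2)+\mu\,\widehat q(\mu^2)$ is Hurwitz stable. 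First I would make this reduction precise for our pair: $\phi_n^{(\alpha,\beta)}$ and $\phi_{n-1}^{(\alpha,\beta)}$ interlace non-strictly with $\zr_1(\phi_{n-1}^{(\alpha,\beta)})\le\zr_1(\phi_n^{(\alpha,\beta)})$ exactly when the pair $\bigl(\phi_n^{(\alpha,\beta)},\phi_{n-1}^{(\alpha,\beta)}\bigr)$ is a \emph{real pair} with that zero ordering, which by Theorem~\ref{th:H-B} (applied to the reversed polynomials, as on the previous page) is equivalent to stability of a suitable real polynomial built from them.

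Next I would feed~\eqref{eq:main_rel} into this picture. Reading~\eqref{eq:main_rel} with the argument reversed, the combination $\frac{n+\alpha+\beta}{n+\alpha}\phi_n^{(\alpha,\beta)}+A\,\phi_{n-1}^{(\alpha,\beta)}$ is, up to the fixed positive factor $1/(n+\alpha)$, equal to $\bigl((1+A)n+\alpha+\beta\bigr)\phi_n^{(\alpha,\beta-1)}+2\mu(1-A)\bigl(\phi_n^{(\alpha,\beta-1)}\bigr)'$. Since $\phi_n^{(\alpha,\beta-1)}$ has nonnegative coefficients whenever $\alpha>-1$, the hypothesis ``$\phi_n^{(\alpha,\beta-1)}$ has only real zeros'' forces those zeros to be negative, so by Rolle's theorem $\phi_n^{(\alpha,\beta-1)}$ and its derivative interlace (non-strictly, with the derivative's zeros strictly inside). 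Hence the right-hand side of~\eqref{eq:main_rel}, being a real combination of two members of a real pair, has only real zeros for \emph{every} real~$A$; the same then holds for the left-hand side. That is exactly the statement that $\bigl(\phi_n^{(\alpha,\beta)},\phi_{n-1}^{(\alpha,\beta)}\bigr)$ is a real pair, i.e.\ the two polynomials interlace non-strictly. The simple/strict refinement follows by the same argument tracking coprimality: $\phi_n^{(\alpha,\beta-1)}$ having only \emph{simple} real zeros makes it coprime with its derivative, so the real combination never acquires a multiple zero, hence the interlacing is strict. For the converse direction I would reverse the implication: if $\phi_n^{(\alpha,\beta)}$ and $\phi_{n-1}^{(\alpha,\beta)}$ interlace, then every real combination on the left of~\eqref{eq:main_rel} has only real zeros, and specialising~$A$ (in particular $A=1$, which kills the derivative term, and a second value to recover $\bigl(\phi_n^{(\alpha,\beta-1)}\bigr)'$) shows $\phi_n^{(\alpha,\beta-1)}$ itself has only real zeros.

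It remains to pin down the sign/ordering claims: that the zeros are \emph{negative} and that $\zr_1(\phi_{n-1}^{(\alpha,\beta)})\le\zr_1(\phi_n^{(\alpha,\beta)})$. Negativity is immediate from nonnegativity of coefficients (valid since $\alpha>-1$) together with $\phi_n^{(\alpha,\beta)}(0)=\frac{(\alpha+1)_n}{n!}\ne0$, which also handles the degree bookkeeping via the $\zr_0=0$, $\zr_i=-\infty$ convention. For the ordering I would compare signs at the origin: from~\eqref{eq:fl_summ1} (equivalently, $A=1$ in~\eqref{eq:main_rel}) one reads off $(2n+\alpha+\beta)\phi_n^{(\alpha,\beta-1)}=(n+\alpha+\beta)\phi_n^{(\alpha,\beta)}+(n+\alpha)\phi_{n-1}^{(\alpha,\beta)}$, and evaluating the Pochhammer constants at $\mu=0$ shows all three leading ($\mu^0$) coefficients are positive under $\alpha>-1,\ n+\alpha+\beta>0$; combined with the (now established) interlacing, a standard sign-chase on $[\zr_1(\phi_n),0]$ forces the first zero of $\phi_{n-1}^{(\alpha,\beta)}$ not to lie to the right of that of $\phi_n^{(\alpha,\beta)}$.

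The main obstacle I anticipate is not the algebra of~\eqref{eq:main_rel} but the careful handling of \emph{non-strict} interlacing and degree drops: one must ensure that passing to the reversed polynomials and invoking Theorem~\ref{th:H-B} remains legitimate when $\gcd(\phi_n^{(\alpha,\beta)},\phi_{n-1}^{(\alpha,\beta)})$ is nontrivial or when $\deg\phi_{n-1}^{(\alpha,\beta)}<\deg\phi_n^{(\alpha,\beta)}$ (which happens precisely when $n$ is even). I expect to resolve this by a continuity/perturbation argument in~$\beta$, proving the strict statement first on an open set and then descending to the non-strict boundary case, exactly as the lemma's parenthetical phrasing suggests.
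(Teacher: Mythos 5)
Your proposal is correct and follows essentially the same route as the paper: Rolle's theorem makes $\bigl(\phi_{n}^{(\alpha,\beta-1)},\mu(\phi_{n}^{(\alpha,\beta-1)})'\bigr)$ a real pair, the identity~\eqref{eq:main_rel} quantified over all real~$A$ then shows $\bigl(\phi_{n}^{(\alpha,\beta)},\phi_{n-1}^{(\alpha,\beta)}\bigr)$ is a real pair (with the strict/simple refinement tracked through coprimality and the two special values of~$A$), and the converse follows from the specialisation $A=1$, i.e.~\eqref{eq:fl_summ1}. The Hermite--Biehler/reversal scaffolding in your first paragraph and the closing continuity-in-$\beta$ worry are unnecessary detours --- the real-pair definition already handles non-strict interlacing and degree drops directly --- but they do not affect the argument's validity.
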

\begin{proof}
    The relation~\eqref{eq:main_rel} with~$A=1$ and~$A=-(n+\alpha+\beta)/n$ implies that each
    common zero of the polynomials~$\phi_{n-1}^{(\alpha,\beta)}$ and~$\phi_{n}^{(\alpha,\beta)}$
    is a multiple zero of~$\phi_{n}^{(\alpha,\beta-1)}$. The converse result is given
    by~\eqref{eq:fl_diff1} and~\eqref{eq:fl_diff2}.
    
    Suppose that~$\phi_{n}^{(\alpha,\beta-1)}$ has only real zeros. The coefficients of this
    polynomial are positive under the assumptions of the lemma, and hence all of its zeros are
    negative. By Rolle's theorem, the
    pair~$\left(\phi_{n}^{(\alpha,\beta-1)},\mu\left(\phi_{n}^{(\alpha,\beta-1)}\right)'\right)$
    is real. Therefore, the polynomials $\phi_{n-1}^{(\alpha,\beta)}$ and
    $\phi_{n}^{(\alpha,\beta)}$ have only real zeros by the formulae~\eqref{eq:fl_diff1}
    and~\eqref{eq:fl_diff2}, respectively. The zeros are negative automatically since the
    coefficients of polynomials are positive. Moreover, we have that the first zero
    of~$\phi_{n}^{(\alpha,\beta)}$ is closer to the origin than that
    of~$\phi_{n-1}^{(\alpha,\beta)}$. The relation~\eqref{eq:main_rel} holds for all real~$A$,
    which yields that the polynomials~$\phi_{n-1}^{(\alpha,\beta)}$
    and~$\phi_{n}^{(\alpha,\beta)}$ form a real pair and thus have (non-strictly) interlacing
    zeros.

    Let~$\phi_{n-1}^{(\alpha,\beta)}$ and $\phi_{n}^{(\alpha,\beta)}$ have negative interlacing
    zeros. Then any of their real combinations only has real zeros. This is true
    for~$\phi_{n}^{(\alpha,\beta-1)}$ according to the identity~\eqref{eq:fl_summ1}.
\end{proof}
\begin{corollary}\label{cor:main}
    For $-1<\alpha<1$, $\beta>0$ or $1\leqslant\alpha<2$, $\beta>1$ the zeros of the
    polynomials~$\phi_{n}^{(\alpha,\beta)}$ and $\phi_{n-1}^{(\alpha,\beta)}$, \ $n = 4,5,\dots$,
    interlace. (Furthermore, the polynomials~$\phi_{n}^{(\alpha,\beta)}$ and
    $\mu\phi_{n-1}^{(\alpha,\beta)}$ interlace.)
\end{corollary}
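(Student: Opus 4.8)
The plan is to obtain this as a direct consequence of Lemma~\ref{lem:main} once we have checked that $\phi_n^{(\alpha,\beta-1)}$ falls under Theorem~\ref{th:Cs}. I would first verify that, in both parameter ranges, the polynomial $\phi_n^{(\alpha,\beta-1)}$ has only simple real zeros. If $-1<\alpha<1$ and $\beta>0$, then $-1<\alpha<1$ and $\beta-1>-1$, so Theorem~\ref{th:Cs} applies to $\phi_n^{(\alpha,\beta-1)}$, and the Remark following that theorem guarantees that its zeros are in fact simple (the interlacing recorded there is strict). If $1\leqslant\alpha<2$ and $\beta>1$, then $\beta-1>0$, and the extension of Theorem~\ref{th:Cs} to $1\leqslant\alpha<2$, $\beta>0$ together with the same strictness remark again shows that $\phi_n^{(\alpha,\beta-1)}$ has only simple (negative) zeros.

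With this in hand I would invoke Lemma~\ref{lem:main}. Its hypotheses $\alpha>-1$ and $n+\alpha+\beta>0$ hold trivially since $\alpha>-1$, $\beta>0$ and $n\geqslant4$. The lemma then yields that the zeros of $\phi_n^{(\alpha,\beta)}$ and $\phi_{n-1}^{(\alpha,\beta)}$ are negative and interlace strictly, and moreover that $\zr_1\!\left(\phi_{n-1}^{(\alpha,\beta)}\right)<\zr_1\!\left(\phi_n^{(\alpha,\beta)}\right)$, i.e. among the zeros of these two polynomials the one closest to the origin belongs to $\phi_n^{(\alpha,\beta)}$. This is precisely the first assertion of the corollary.

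For the parenthetical statement I would adjoin a zero at the origin to $\phi_{n-1}^{(\alpha,\beta)}$. Since $\deg\phi_n^{(\alpha,\beta)}-\deg\phi_{n-1}^{(\alpha,\beta)}$ equals $1$ for even $n$ and $0$ for odd $n$, the degree of $\mu\phi_{n-1}^{(\alpha,\beta)}$ differs from that of $\phi_n^{(\alpha,\beta)}$ by at most one; combining the strict interlacing of $\phi_n^{(\alpha,\beta)}$ and $\phi_{n-1}^{(\alpha,\beta)}$ with the facts that $0$ is not a zero of $\phi_n^{(\alpha,\beta)}$ and that $\zr_1\!\left(\phi_n^{(\alpha,\beta)}\right)$ lies closer to the origin than $\zr_1\!\left(\phi_{n-1}^{(\alpha,\beta)}\right)$, one reads off the alternation $0,\ \zr_1\!\left(\phi_n^{(\alpha,\beta)}\right),\ \zr_1\!\left(\phi_{n-1}^{(\alpha,\beta)}\right),\ \zr_2\!\left(\phi_n^{(\alpha,\beta)}\right),\dots$, so $\phi_n^{(\alpha,\beta)}$ and $\mu\phi_{n-1}^{(\alpha,\beta)}$ interlace. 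The only point that needs genuine care is the passage from Theorem~\ref{th:Cs} to the simplicity of the zeros of $\phi_n^{(\alpha,\beta-1)}$: without it Lemma~\ref{lem:main} would deliver only non-strict interlacing, so one must appeal to the strict interlacing recorded in the Remark to Theorem~\ref{th:Cs}; the degree bookkeeping for $\mu\phi_{n-1}^{(\alpha,\beta)}$ is the one remaining thing to keep track of.
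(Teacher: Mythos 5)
Your proposal is correct and follows exactly the paper's (very terse) proof: the corollary is deduced from Theorem~\ref{th:Cs} together with its remark (which supplies both the extension to $1\leqslant\alpha<2$, $0<\beta$ and the simplicity of the zeros of $\phi_n^{(\alpha,\beta-1)}$) and from Lemma~\ref{lem:main}, whose conclusion $\zr_1\bigl(\phi_{n-1}^{(\alpha,\beta)}\bigr)\leqslant\zr_1\bigl(\phi_{n}^{(\alpha,\beta)}\bigr)$ is precisely what justifies the parenthetical claim about $\mu\phi_{n-1}^{(\alpha,\beta)}$. You have merely written out the bookkeeping that the paper leaves implicit.
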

\begin{proof}
    This corollary is provided by Theorem~\ref{th:Cs} (see also the remark on it) and
    Lemma~\ref{lem:main}.
\end{proof}

\begin{theorem*}[Theorem CW \textnormal{\sffamily(Theorem~3.10, Charalambides et al.~{\cite{CW}})}]
    \refstepcounter{dummy}\namedlabel{th:CW}{CW}
    Conjecture~\ref{con:main} holds true for~$-1<\alpha,\beta<0$ and for~$0<\alpha,\beta<1$.
\end{theorem*}
This theorem relies on Theorem~3.8 and Theorem~3.9 of the same work and on Theorem~\ref{th:Cs}.
In fact, the original proof (which we extend in the next section to treat
Conjecture~\ref{con:main_2}) does not need any upper bound on~$\beta$. It becomes more evident
on recalling that the region of positive~$\beta$ is covered by Corollary~\ref{cor:main} (i.e. is
a straightforward consequence of Lemma~\ref{lem:main} and Theorem~\ref{th:Cs}).
\begin{theorem*}[Theorem CW*]
    \refstepcounter{dummy}\namedlabel{th:CWst}{CW*}
    Conjecture~\ref{con:main} holds true for
    \(-1<\alpha<0\), \(-1<\beta\) \ or \
    \(0\leqslant\alpha<1\), \(0<\beta\) \ or \
    \(1\leqslant\alpha<2\), \(1<\beta\).
\end{theorem*}
\begin{proof}
    Corollary~\ref{cor:main} suits the case of positive~$\alpha$. For the region with
    negative~$\alpha$ it is enough to prove only Theorem~\ref{th:4} (which is stated below) and
    Theorem~3.9 is not needed. Indeed, according to Theorem~\ref{th:4} and Theorem~\ref{th:H-B}
    we have that all zeros of~$\phi_{n}^{(\alpha,\beta-1)}$ are simple and real for all~$n$, so
    Lemma~\ref{lem:main} is applicable.
\end{proof}
%\setstretch{1.1}
\begin{theorem}[Equivalent to Theorem~3.8, Charalambides et al.~{\cite{CW}}]\label{th:4}
    If $-1<\alpha<0$, $-1<\beta$ and $n=4,5,\dots$, then the zeros of the polynomial
    $\Phi_{n}(1;\mu)$, where
    \[
    \Phi_{n}(x;\mu)
    \colonequals\sum_{k=0}^{n}
    \frac{d^{k}}{dx^{k}} P_{n}^{(\alpha,\beta-1)}(x) \mu^k,
    \]
    lie in the open left half of the complex plane.
\end{theorem}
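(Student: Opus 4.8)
The plan is to apply the Hermite--Biehler theorem (Theorem~\ref{th:H-B}) to $\Phi_n(1;\mu)$, after separating its terms of even and odd degree in~$\mu$.

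First I would write $\Phi_n(1;\mu)=p(\mu^{2})+\mu\,q(\mu^{2})$, where $p(\mu^2)$ collects the even-degree terms and $\mu\,q(\mu^2)$ the odd-degree ones, and identify both polynomials. Comparing with~\eqref{eq:phi_expl}, the even part is $p=\phi_n^{(\alpha,\beta-1)}$, because $p(\mu)=\sum_k\bigl(\tfrac{d^{2k}}{dx^{2k}}P_n^{(\alpha,\beta-1)}\bigr)(1)\,\mu^{k}$. For the odd part, the odd-order derivatives of $P_n^{(\alpha,\beta-1)}$ at $x=1$ are the even-order derivatives of $\bigl(P_n^{(\alpha,\beta-1)}\bigr)'=\tfrac{n+\alpha+\beta}{2}\,P_{n-1}^{(\alpha+1,\beta)}$ at $x=1$ by~\eqref{eq:diff_P} with $m=1$, so $q=\tfrac{n+\alpha+\beta}{2}\,\phi_{n-1}^{(\alpha+1,\beta)}$. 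Since $-1<\alpha<0$ and $n+\alpha+\beta>0$ (automatic for $n\ge4$), all coefficients of $\Phi_n(1;\mu)$, hence of $p$ and of $q$, are positive; in particular $p(0)q(0)>0$ and every real zero of $p$ or $q$ is negative. Thus, by Theorem~\ref{th:H-B}, it suffices to prove that $\phi_n^{(\alpha,\beta-1)}$ and $\mu\,\phi_{n-1}^{(\alpha+1,\beta)}$ have only real zeros and that these zeros interlace strictly; the requirement that the zero of $\phi_n^{(\alpha,\beta-1)}$ nearest the origin be the nearest of all is then automatic, as $\mu\,\phi_{n-1}^{(\alpha+1,\beta)}$ vanishes at~$0$.

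The odd factor is harmless: since $0<\alpha+1<1$ and $\beta>-1$, Theorem~\ref{th:Cs} already gives that $\phi_{n-1}^{(\alpha+1,\beta)}$ has only simple negative zeros. For the even factor I would use the ``stands-apart'' identity of Section~\ref{sec:some-polyn-relat}, with $\beta$ replaced by $\beta-1$:
\[
\phi_n^{(\alpha,\beta-1)}(\mu)=P_n^{(\alpha,\beta-1)}(1)+\tfrac14(n+\alpha+\beta)_2\,\mu\,\phi_{n-2}^{(\alpha+2,\beta+1)}(\mu).
\]
Here $\phi_{n-2}^{(\alpha+2,\beta+1)}$ has only simple negative zeros by the extension of Theorem~\ref{th:Cs} noted in the remark to it (now $1<\alpha+2<2$ and $\beta+1>0$), and, by the interlacing statement in that same remark (applied with $n\mapsto n-1$, $\alpha\mapsto\alpha+1$), the polynomial $\phi_{n-1}^{(\alpha+1,\beta)}$ interlaces $\phi_{n-2}^{(\alpha+2,\beta+1)}$ strictly, with the zero of $\phi_{n-1}^{(\alpha+1,\beta)}$ nearest the origin being the nearest of all. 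So everything reduces to showing that adding the positive constant $P_n^{(\alpha,\beta-1)}(1)$ to $\tfrac14(n+\alpha+\beta)_2\,\mu\,\phi_{n-2}^{(\alpha+2,\beta+1)}(\mu)$ leaves the result real-rooted and does not spoil its (strict) interlacing with $\phi_{n-1}^{(\alpha+1,\beta)}$.

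This last step is the heart of the matter and the place where I expect the main obstacle, and it is precisely where the hypothesis $-1<\alpha<0$ (together with $n\ge4$) is used: the constant $P_n^{(\alpha,\beta-1)}(1)=\tfrac{\alpha+1}{n}\,P_{n-1}^{(\alpha+1,\beta)}(1)$ is small because $0<\alpha+1<1$. I would argue that on each interval between two consecutive zeros of $\mu\,\phi_{n-2}^{(\alpha+2,\beta+1)}(\mu)$ on which this polynomial is negative, its minimum there has absolute value at least $P_n^{(\alpha,\beta-1)}(1)\big/\bigl(\tfrac14(n+\alpha+\beta)_2\bigr)$, so that each such lobe produces two simple real zeros of $\phi_n^{(\alpha,\beta-1)}$; together with a separate analysis of the leftmost, unbounded interval this recovers all $[n/2]$ zeros, and the interlacing with $\phi_{n-1}^{(\alpha+1,\beta)}$ follows by tracking the signs of $\phi_n^{(\alpha,\beta-1)}$ at the zeros of $\phi_{n-1}^{(\alpha+1,\beta)}$. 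Equivalently --- and this is the route taken in~\cite{CW}, whose Theorem~3.8 the present statement reproduces --- one can run an induction on $n$ in which the interlacing of $\phi_{n-1}^{(\alpha+1,\beta)}$ with $\phi_{n-2}^{(\alpha+2,\beta+1)}$ supplies the sign information needed at the inductive step. In either form, the actual work is the sign-bookkeeping that converts the smallness of $P_n^{(\alpha,\beta-1)}(1)$ into strict interlacing; once it is in place, Theorem~\ref{th:H-B} completes the proof --- and, a posteriori, $\phi_n^{(\alpha,\beta-1)}$ turns out to have only simple real zeros, which is exactly what makes Lemma~\ref{lem:main} applicable in the proof of Theorem~\ref{th:CWst}.
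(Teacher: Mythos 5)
Your setup is sound and agrees with the paper's own reading of the statement: writing $\Phi_n(1;\mu)=p(\mu^2)+\mu q(\mu^2)$ with $p=\phi_n^{(\alpha,\beta-1)}$ and $q=\tfrac{n+\alpha+\beta}{2}\phi_{n-1}^{(\alpha+1,\beta)}$ is exactly the decomposition the remark after the theorem alludes to, and your use of~\eqref{eq:diff_P} and of the identity $\phi_n^{(\alpha,\beta-1)}(\mu)=P_n^{(\alpha,\beta-1)}(1)+\tfrac14(n+\alpha+\beta)_2\,\mu\,\phi_{n-2}^{(\alpha+2,\beta+1)}(\mu)$ is correct. But the argument has a genuine gap at precisely the point you flag as ``the heart of the matter''. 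The claim that each negative lobe of $\tfrac14(n+\alpha+\beta)_2\,\mu\,\phi_{n-2}^{(\alpha+2,\beta+1)}(\mu)$ dips below $-P_n^{(\alpha,\beta-1)}(1)$ is never justified; the observation that $P_n^{(\alpha,\beta-1)}(1)=\tfrac{\alpha+1}{n}P_{n-1}^{(\alpha+1,\beta)}(1)$ is ``small'' is not a bound on the lobe depths, and no mechanism is offered that would produce one. Worse, this lobe-depth statement is \emph{equivalent} to the real-rootedness of $p=\phi_n^{(\alpha,\beta-1)}$ (a degree-$[n/2]$ polynomial of the form constant plus $c\,\mu\,\phi_{n-2}^{(\alpha+2,\beta+1)}$ has $[n/2]$ real zeros exactly when every negative lobe crosses the level $-P_n^{(\alpha,\beta-1)}(1)$), which in turn is essentially the content of the theorem via Theorem~\ref{th:H-B} and Lemma~\ref{lem:main}. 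So the proposal assumes the conclusion at its central step, and the subsequent ``sign-bookkeeping'' for interlacing with $\phi_{n-1}^{(\alpha+1,\beta)}$ is likewise deferred rather than carried out.

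The paper takes an entirely different route that avoids this circularity: it treats $\mu$ with $\Phi_n(1;\mu)=0$ directly, multiplies the defining relation $\Phi_n=P_n^{(\alpha,\beta-1)}+\mu\,\tfrac{d\Phi_n}{dx}$ by $\overline{\tfrac{d\Phi_n}{dx}}\,w_{\alpha,\beta+1}$, integrates over $(-1,1)$, and adds the complex conjugate. The hypotheses $-1<\alpha<0$ and $\beta>-1$ enter only to make $w_{\alpha,\beta+1}$ increasing and $|\Phi_n|^2w_{\alpha,\beta+1}$ vanish at the endpoints, so that the left-hand side of~\eqref{eq:CW_main} is negative, while Jacobi orthogonality (after expanding $P_n^{(\alpha,\beta-1)}$ and $\Re\tfrac{d\Phi_n}{dx}$ in the $P_k^{(\alpha,\beta+1)}$ basis) shows the remaining terms equal $\eta+\zeta\Re\mu$ with $\eta,\zeta>0$; hence $\Re\mu<0$ with no prior knowledge of the zero structure of $\phi_n^{(\alpha,\beta-1)}$. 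If you want to salvage your approach, you would need an independent, quantitative proof of the lobe-depth inequality; as written, the proposal is a reduction of the theorem to an unproved statement of comparable strength.
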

\begin{rem}
    It is worth noting that this theorem cannot be extended to the full
    range~$-1<\beta<0<\alpha<1$. According to computer experiments, Conjecture~\ref{con:main}
    seems to hold in this range, while Theorem~\ref{th:4} fails, e.g., for~$n=12$
    when~$\beta=-0.8$ and~$\alpha\gtrapprox 0.97842$, or when~$\beta=-0.9$
    and~$\vphantom{\Big|}\alpha\gtrapprox 0.97140$. The reason is that proving Conjecture~\ref{con:main} only
    requires negative simple zeros of the polynomial~$\phi_n^{(\alpha,\beta-1)}(\mu)$,
    Theorem~\ref{th:4} nevertheless asserts additional properties
    of~$\phi_{n-1}^{(\alpha+1,\beta)}(\mu)$ as given by the Hermite-Biehler theorem.
\end{rem}
\setstretch{1.15}
\begin{proof}
    This proof is akin to~\cite[Theorem~3.8]{CW} but uses other relations for the Jacobi
    polynomials. The polynomial~$\Phi_{n}(x;\mu)$ satisfies the differential equation (here we
    consider~$\mu$ as a parameter)
    \[
    \Phi_n(x;\mu) = P_n^{(\alpha,\beta-1)}(x) + \mu \frac{d\Phi_n(x;\mu)}{dx}.
    \]
    Let $\Phi_n\colonequals\Phi_n(x;\mu)$ for brevity and let $\overline{\dfrac{d\Phi_n}{dx}}$
    denote a complex conjugate of $\dfrac{d\Phi_n}{dx}$. Multiplying by
    $\overline{\dfrac{d\Phi_n}{dx}} w_{\alpha,\beta+1}$, where
    $w_{\alpha,\beta+1}\colonequals w_{\alpha,\beta+1}(x)=(1-x)^\alpha(1+x)^{\beta+1}$, and
    integration over the interval~$(-1,1)$ gives us
    \[
    \int_{-1}^1\Phi_n\overline{\dfrac{d\Phi_n}{dx}} w_{\alpha,\beta+1}\,dx
    = \int_{-1}^1 P_n^{(\alpha,\beta-1)}\overline{\dfrac{d\Phi_n}{dx}} w_{\alpha,\beta+1}\,dx
      + \mu \int_{-1}^1\left|\dfrac{d\Phi_n}{dx}\right|^2w_{\alpha,\beta+1}\, dx.
    \]
    Select $\mu$ so that \( \Phi_n(1;\mu)=0 \). To estimate the real part of~$\mu$ we add the last
    equation to its complex conjugate and obtain
    \begin{equation}    \label{eq:CW_main}
        \int_{-1}^1\frac {d\left(|\Phi_n|^2\right)}{dx} w_{\alpha,\beta+1}\,dx
        = \int_{-1}^1 P_n^{(\alpha,\beta-1)}\cdot
                     2\Re\dfrac{d\Phi_n}{dx}\cdot w_{\alpha,\beta+1}\,dx
        + 2\Re \mu \int_{-1}^1\left|\dfrac{d\Phi_n}{dx}\right|^2w_{\alpha,\beta+1}\, dx.
    \end{equation}
    Since $w_{\alpha,\beta+1}$ increases on~$(-1,1)$ and
    $\lim_{x\to-1+}\Phi_nw_{\alpha,\beta+1}=\lim_{x\to1-}\Phi_nw_{\alpha,\beta+1}=0$, the
    left-hand side satisfies
    \[
    \int_{-1}^1\frac {d\left(|\Phi_n|^2\right)}{dx} w_{\alpha,\beta+1}\,dx
    =-\int_{-1}^1|\Phi_n|^2 w_{\alpha,\beta+1}'\,dx
    <0.
    \]
    Applying the relation~\eqref{eq:fl_P_summ1} to the polynomial~$P_{n}^{(\alpha,\beta-1)}$
    three times gives us
    \begin{multline*}
        P_{n}^{(\alpha,\beta-1)}
        =\frac{n+\alpha+\beta}{2n+\alpha+\beta}P_n^{(\alpha,\beta)}
          +\frac{n+\alpha}{2n+\alpha+\beta}P_{n-1}^{(\alpha,\beta)}
        =
        \frac{(n+\alpha+\beta)_2}{(2n+\alpha+\beta)_2}P_n^{(\alpha,\beta+1)}
        +\frac{(n+\alpha+\beta)(n+\alpha)}{(2n+\alpha+\beta)_2}P_{n-1}^{(\alpha,\beta+1)}\\
        +\frac{(n+\alpha)(n+\alpha+\beta)}{(2n+\alpha+\beta-1)_2}P_{n-1}^{(\alpha,\beta+1)}
        +\frac{(n+\alpha-1)_2}{(2n+\alpha+\beta-1)_2}P_{n-2}^{(\alpha,\beta+1)},
    \end{multline*}
    that is,
    \[
        P_{n}^{(\alpha,\beta-1)}
        = \frac{(n+\alpha+\beta)_2}{(2n+\alpha+\beta)_2}P_n^{(\alpha,\beta+1)}
            +\frac{2(n+\alpha)(n+\alpha+\beta)}
               {(2n+\alpha+\beta-1)(2n+\alpha+\beta+1)}
              P_{n-1}^{(\alpha,\beta+1)}
          +\frac{(n+\alpha-1)_2}{(2n+\alpha+\beta-1)_2}P_{n-2}^{(\alpha,\beta+1)}.
    \]
    By the definition of~$\Phi_n$ and the formula~\eqref{eq:diff_P},
    \[
    \Re\frac {d\Phi_n}{dx}
    = 2^{-1} (n+\alpha+\beta)P_{n-1}^{(\alpha+1,\beta)}
      + \Re \mu\cdot 2^{-2} (n+\alpha+\beta)_2P_{n-2}^{(\alpha+2,\beta+1)}
      + \big\{\text{polynomials of degree }<n-2\big\}. 
    \]
    The difference~$\eqref{eq:fl_P_summ1}-\eqref{eq:fl_P_summ2}$ induces the identity
    \(P_{n-1}^{(\alpha+1,\beta)} = P_{n-1}^{(\alpha,\beta+1)}+P_{n-2}^{(\alpha+1,\beta+1)}\), so
    we finally have
    \[
    \int_{-1}^1 P_n^{(\alpha,\beta-1)}\cdot2\Re\frac{d\Phi_n}{dx}\cdot w_{\alpha,\beta+1}\,dx
    = \eta+\zeta \Re \mu,
    \quad\text{where}\quad
    \eta,\zeta>0.
    \]
    Now the terms of the relation~\eqref{eq:CW_main} are estimated, and it yields~$0>\Re\mu$.
\end{proof}

\section{Results on Conjecture~\texorpdfstring{\ref{con:main_2}}{B}}\label{sec:results-conj-refc-B}
%\setstretch{1.1}
We just have shown that the result on the Conjecture~\ref{con:main} in~\cite{CW} can be obtained
in a shorter way if we consider polynomials with shifted parameter values (we
used~$\phi_{n}^{(\alpha,\beta-1)}$) instead of real combinations of the
polynomials~$\phi_n^{(\alpha,\beta)}$ and $\phi_{n-1}^{(\alpha,\beta)}$. At the same time, to
verify the Conjecture~\ref{con:main_2} we can combine both these ideas.

For any fixed $n>3$ consider the intermediary polynomial
    \[
      f(x;\mu)\colonequals
      \sum_{k=0}^n \mu^k\left(
            A \frac{d^k}{dx^k}P_n^{(\alpha,\beta)}(x)
          + \mu\frac{d^k}{dx^k}P_{n-1}^{(\alpha,\beta)}(x)
      \right).
    \]
\setstretch{1.1}
\begin{lemma}\label{lemma:f_stable}
    The polynomial $f(1;\mu)$ is Hurwitz-stable provided that~$-1<\alpha<1$ and~$\beta,A>0$.
\end{lemma}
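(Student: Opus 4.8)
The plan is to adapt the ``energy estimate'' of Charalambides and Waleffe used in the proof of Theorem~\ref{th:4}, applied to $f$ itself. Suppose $f(1;\mu_0)=0$; I want $\Re\mu_0<0$. Put $F(x):=f(x;\mu_0)$. Because $\deg P_{n-1}^{(\alpha,\beta)}<n$, a direct check from the definition of $f$ gives the first-order relation (in $x$, with $\mu_0$ a parameter)
\[
  F=G+\mu_0 F',\qquad G(x):=A\,P_n^{(\alpha,\beta)}(x)+\mu_0\,P_{n-1}^{(\alpha,\beta)}(x),\qquad F(1)=0 .
\]
Multiplying by $\overline{F'}\,w$ for a positive weight $w$ on $(-1,1)$, integrating over $(-1,1)$ and adding the complex conjugate yields
\[
  \int_{-1}^1\frac{d\bigl(|F|^2\bigr)}{dx}\,w\,dx
  =\int_{-1}^1 2\Re\!\bigl(G\,\overline{F'}\bigr)\,w\,dx+2\,\Re\mu_0\!\int_{-1}^1|F'|^2 w\,dx .
\]

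Take $w=w_{\alpha,\beta}(x)=(1-x)^\alpha(1+x)^{\beta}$, and assume first that $\alpha\le 0$. Then $w$ is increasing on $(-1,1)$ and $w(-1)=0$; together with $F(1)=0$ this removes the boundary terms, so the left-hand side equals $-\int_{-1}^1|F|^2 w_{\alpha,\beta}'\,dx<0$. For the first integral on the right, note that in the basis $\{P_m^{(\alpha,\beta)}\}$ (orthogonal with respect to $w_{\alpha,\beta}$) the polynomial $G$ has only the degree-$n$ component $A$ and the degree-$(n-1)$ component $\mu_0$. Since $\deg F'=n-1$, the degree-$n$ component of $G$ is orthogonal to $\overline{F'}$ and drops out, while the top (degree-$(n-1)$) term of $F'$ — with a real positive coefficient — meets only the $\mu_0 P_{n-1}^{(\alpha,\beta)}$ in $G$. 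Hence $\int_{-1}^1 2\Re(G\overline{F'})w_{\alpha,\beta}\,dx=\zeta\,\Re\mu_0$ for an explicit $\zeta>0$. Substituting,
\[
  -\int_{-1}^1|F|^2 w_{\alpha,\beta}'\,dx=\Bigl(\zeta+2\!\int_{-1}^1|F'|^2 w_{\alpha,\beta}\,dx\Bigr)\Re\mu_0 ,
\]
and since the left side is negative and the bracket is positive ($F'\not\equiv 0$ because $\deg F=n\ge 4$), we conclude $\Re\mu_0<0$.

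The main obstacle is the range $0\le\alpha<1$: there $w_{\alpha,\beta}$ is no longer monotone on $(-1,1)$, and every Jacobi-type weight that is increasing has first exponent $<\alpha$, which forces a long (albeit sign-definite) expansion of $G$ and destroys the clean identity above. For this range I would instead verify the Hermite--Biehler conditions (Theorem~\ref{th:H-B}) for $f(1;\mu)$ directly. Splitting $f(1;\mu)$ into its even and odd parts in $\mu$ and using \eqref{eq:diff_P} one finds $f(1;\mu)=p(\mu^2)+\mu\,q(\mu^2)$ with
\[
  p(t)=A\,\phi_n^{(\alpha,\beta)}(t)+\tfrac{n+\alpha+\beta}{2}\,t\,\phi_{n-2}^{(\alpha+1,\beta+1)}(t),\qquad
  q(t)=\tfrac{n+\alpha+\beta+1}{2}\,A\,\phi_{n-1}^{(\alpha+1,\beta+1)}(t)+\phi_{n-1}^{(\alpha,\beta)}(t) ,
\]
so that $p(0)q(0)>0$ is immediate and it remains to show the zeros of $p(t)$ and $t\,q(t)$ are nonpositive and strictly interlacing. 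Here one can combine: the interlacing of $\phi_n^{(\alpha,\beta)}$ with $\mu\,\phi_{n-1}^{(\alpha,\beta)}$ and of $\phi_{n-1}^{(\alpha+1,\beta+1)}$ with $\mu\,\phi_{n-2}^{(\alpha+1,\beta+1)}$ from Corollary~\ref{cor:main}; the interlacing of $\phi_m^{(\alpha,\beta)}$ with $\phi_{m-1}^{(\alpha+1,\beta+1)}$ from Theorem~\ref{th:Cs} and its Remark; and the negativity of all zeros from Theorem~\ref{th:Cs} — all valid for $-1<\alpha<1$, $\beta>0$. The delicate point is that interlacing is not transitive, so passing from these ``adjacent'' interlacings to the interlacing of the combinations $p$ and $t\,q$ needs a continuity/deformation argument in $A$, anchored at $A\to\infty$ (where $p,t\,q$ reduce to $\phi_n^{(\alpha,\beta)}$ and $t\,\phi_{n-1}^{(\alpha+1,\beta+1)}$) and at $A\to 0^+$.
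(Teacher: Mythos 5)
Your first argument is a correct adaptation of the energy estimate, but it only covers $-1<\alpha\le 0$; the lemma claims $-1<\alpha<1$, and your treatment of the remaining range $0\le\alpha<1$ is not a proof. You correctly diagnose the obstruction (for $\alpha>0$ the weight $w_{\alpha,\beta}$ is no longer monotone, and any increasing Jacobi weight must have first exponent $<\alpha$, which appears to break orthogonality against $G$), but the fallback you propose is a sketch with an acknowledged hole: you would need the interlacing of $p(t)$ and $t\,q(t)$ for all $A>0$, and you concede that deducing this from the pairwise interlacings of the four constituent polynomials ``needs a continuity/deformation argument'' that you do not supply. Worse, that target is essentially Corollary~\ref{cr:main_2}/Corollary~\ref{cr:interl_2} (via Lemma~\ref{lemma:2pair_interl}), i.e.\ exactly what the paper \emph{derives from} this lemma; proving it independently would make the lemma superfluous, and it is not obviously easier than the lemma itself.

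The idea you are missing is the paper's choice of multiplier and weight: instead of testing the relation $F=G+\mu F'$ against $\overline{F'}\,w_{\alpha,\beta}$ (as in the proof of Theorem~\ref{th:4}), one divides by $\mu$ and tests against $\overline{f}\,w_{\alpha-1,\beta}$. The shifted weight $w_{\alpha-1,\beta}=(1-x)^{\alpha-1}(1+x)^{\beta}$ is increasing on $(-1,1)$ for the \emph{entire} range $\alpha<1$, $\beta>0$, which is exactly what makes $\int_{-1}^1\frac{d(|f|^2)}{dx}\,w_{\alpha-1,\beta}\,dx=-\int_{-1}^1|f|^2w_{\alpha-1,\beta}'\,dx<0$; and the apparent loss of orthogonality is repaired by the condition $f(1;\mu)=0$ itself, which forces $f(x;\mu)=(1-x)\sum_{k=0}^{n-1}c_kP_k^{(\alpha,\beta)}(x)$, so that $f\,w_{\alpha-1,\beta}=\bigl(\sum_k c_kP_k^{(\alpha,\beta)}\bigr)w_{\alpha,\beta}$ and the integrals against $P_n^{(\alpha,\beta)}$ and $P_{n-1}^{(\alpha,\beta)}$ reduce by orthogonality to $0$ and to $c_{n-1}\|P_{n-1}^{(\alpha,\beta)}\|^2$ with $c_{n-1}$ real and negative. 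Testing against $\overline{f}$ rather than $\overline{f'}$ also changes where $\Re\mu$ appears: it ends up on the right-hand side as $\bigl(\tfrac1\mu+\tfrac1{\overline\mu}\bigr)\int|f|^2w_{\alpha-1,\beta}\,dx$, with every term on the left-hand side nonpositive, giving $\Re\mu<0$ in one stroke on the whole range. Your $\alpha\le0$ computation is sound and could be kept as an alternative for that subrange, but as written the proposal does not establish the lemma.
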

\begin{proof}
    From the definition of~$f(x,\mu)$ the differential equation
    \[
      \mu\frac{d}{dx}f(x;\mu) + A P_n^{(\alpha,\beta)}(x)
          + \mu P_{n-1}^{(\alpha,\beta)}(x) = f(x;\mu)
    \]
    follows. Multiplication by $\overline{f(x;\mu)}w_{\alpha-1,\beta}(x)$ gives us
    \[
      \overline{f}\frac{df}{dx} w_{\alpha-1,\beta}
          + A P_n^{(\alpha,\beta)}(x) \frac {\overline{f}}\mu w_{\alpha-1,\beta}
          + P_{n-1}^{(\alpha,\beta)}(x)\overline{f}w_{\alpha-1,\beta}
      = \frac 1\mu |f|^2 w_{\alpha-1,\beta},
    \]
    where we put \( f\colonequals f(x;\mu)\) and
    \(w_{\alpha-1,\beta}\colonequals w_{\alpha-1,\beta}(x)=(1-x)^{\alpha-1}(1+x)^\beta\) for
    brevity. Adding to this equality its complex conjugate and integrating yields
    \begin{multline}\label{eq:int_rel}
      \int_{-1}^1\frac{d\left(|f|^2\right)}{dx} w_{\alpha-1,\beta}\,dx
          + A \int_{-1}^1 P_n^{(\alpha,\beta)}(x)
            \left(\frac {\overline{f}}\mu + \frac {f}{\overline\mu}\right)w_{\alpha-1,\beta}\,dx
          + \int_{-1}^1 P_{n-1}^{(\alpha,\beta)}(x)\left(\overline{f} + f \right) w_{\alpha-1,\beta}\,dx
          \\= \left(\frac 1\mu + \frac 1{\overline\mu}\right)
              \int_{-1}^1 |f|^2 w_{\alpha-1,\beta}\,dx.
    \end{multline}
    
    Take $\mu$ so that $f(1;\mu)=0$. Then the polynomial~$f(x;\mu)$ can be represented as
    \[
        f(x;\mu)=(1-x)\sum_{k=0}^{n-1} c_k P_k^{(\alpha,\beta)}(x)
    \]
    with some complex constants $c_k$ depending on~$\mu$ (generally speaking). Observe
    that~$c_{n-1}<0$: denoting the leading coefficient in~$x$ by~$\lc$, we obtain
    \[
    c_{n-1}
      =\frac{\lc\left(f(x;\mu)\right)}{\lc\left(-x P_{n-1}^{(\alpha,\beta)}(x)\right)}
      =-\frac{A \cdot \lc\left(P_n^{(\alpha,\beta)}(x)\right)}{\lc\left(P_{n-1}^{(\alpha,\beta)}(x)\right)}
      \xlongequal{\eqref{eq:def_Pn}}-A\frac{(n+\alpha+\beta+1)_n\cdot(n-1)!\,2^{n-1}}{n!\,2^n\cdot(n+\alpha+\beta)_{n-1}}
      =-A\frac{(2n+\alpha+\beta-1)_2}{2n(n+\alpha+\beta)}<0.
    \]
    Then we have
    \begin{equation}\label{eq:int_rel1}
        \begin{aligned}
            \int_{-1}^1 P_n^{(\alpha,\beta)}(x) f w_{\alpha-1,\beta}\,dx
            &= \int_{-1}^1 P_n^{(\alpha,\beta)}(x)
               \sum_{k=0}^{n-1} c_k P_k^{(\alpha,\beta)}(x) w_{\alpha,\beta}\,dx
            =0,\\
            \int_{-1}^1 P_{n-1}^{(\alpha,\beta)}(x) f w_{\alpha-1,\beta}\,dx
            &= c_{n-1}\int_{-1}^1
                \left(P_{n-1}^{(\alpha,\beta)}(x)\right)^2 w_{\alpha,\beta}\,dx
            <0
        \end{aligned}
    \end{equation}
    as a consequence of orthogonality of the Jacobi polynomials.
    Note that if $-1<\alpha<1$ and $\beta>0$, then
    \[
    w_{\alpha-1,\beta}'=\left(\beta(1-x)-(\alpha-1)(1+x)\right)\cdot w_{\alpha-2,\beta-1} >0
    \quad\text{and}\quad
    \lim_{x\to-1+} |f|^2 w_{\alpha-1,\beta} =\lim_{x\to1-} |f|^2 w_{\alpha-1,\beta}=0.
    \]
    Therefore, integrating by parts yields
    \begin{equation}\label{eq:int_rel3}
        \int_{-1}^1\frac{d\left(|f|^2\right)}{dx} w_{\alpha-1,\beta}\,dx
        = - \int_{-1}^1 \left(|f|^2\right) w_{\alpha-1,\beta}' \,dx < 0.
    \end{equation}
    By the formulae~\eqref{eq:int_rel1}--\eqref{eq:int_rel3}, the left-hand side of the
    equation~\eqref{eq:int_rel} is negative, thus necessarily~$\Re\mu<0$.
\end{proof}

%\setstretch{1.15}
Consider also another intermediary polynomial for a fixed~$n>3$, namely
    \[
      g(x;\mu)\colonequals
      \sum_{k=0}^n \mu^{k}\left(
            \mu \frac{d^k}{dx^k}P_n^{(\alpha,\beta)}(x)
          + A\frac{d^k}{dx^k}P_{n-1}^{(\alpha,\beta)}(x)
      \right).
    \]
\begin{lemma}\label{lemma:g_stable}
    The polynomial $g(1;\mu)$ is Hurwitz-stable provided that~$-1<\alpha<0$ and~$\beta,A>0$.
\end{lemma}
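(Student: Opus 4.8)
The plan is to mimic the proof of Lemma~\ref{lemma:f_stable} almost verbatim, but with the roles of the two auxiliary weights interchanged so that the ``wrong-sign'' term comes out right. Starting from the definition of $g(x;\mu)$ one gets the differential equation
\[
  \mu\frac{d}{dx}g(x;\mu) + \mu P_n^{(\alpha,\beta)}(x) + A P_{n-1}^{(\alpha,\beta)}(x) = g(x;\mu).
\]
I would multiply this by $\overline{g(x;\mu)}\,w_{\alpha,\beta-1}(x)$ (note the shift is now in $\beta$, not $\alpha$ — this is the key change, motivated by the fact that for $-1<\alpha<0$ we cannot afford to decrease $\alpha$ below $-1$), add the complex conjugate, and integrate over $(-1,1)$, obtaining the analogue of~\eqref{eq:int_rel}. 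Choosing $\mu$ with $g(1;\mu)=0$ allows the factorisation $g(x;\mu)=(1-x)\sum_{k=0}^{n-1}c_k P_k^{(\alpha,\beta)}(x)$ exactly as before.

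Next I would compute the boundary/sign data. The leading coefficient computation now gives $c_{n-1}$ proportional to the leading coefficient of $-xP_n^{(\alpha,\beta)}$ against $P_{n-1}^{(\alpha,\beta)}$ — so $c_{n-1}$ will be a negative multiple of $\lc(P_n)/\lc(-xP_{n-1})$, hence $c_{n-1}<0$; this again makes $\int_{-1}^1 P_{n-1}^{(\alpha,\beta)} g\, w_{\alpha,\beta-1}\,dx<0$ by orthogonality (rewriting $w_{\alpha,\beta-1}=(1-x)^{-1}w_{\alpha,\beta}$ so that the spurious factor $(1-x)$ is absorbed), while $\int_{-1}^1 P_n^{(\alpha,\beta)} g\, w_{\alpha,\beta-1}\,dx=0$ since $g/(1-x)$ has degree $\le n-1$. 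For the boundary term I need $w_{\alpha,\beta-1}'=\bigl((\beta-1)(1-x)-\alpha(1+x)\bigr)w_{\alpha-1,\beta-2}$; under the hypotheses $-1<\alpha<0$ and $\beta>0$ this bracket is $(\beta-1)(1-x)-\alpha(1+x)$, which at $x=1$ equals $-2\alpha>0$ and at $x=-1$ equals $2(\beta-1)$ — so for $\beta\ge 1$ it is clearly positive, and for $0<\beta<1$ one checks the linear function stays positive on $[-1,1]$ precisely because $-\alpha>0$ dominates; either way $w_{\alpha,\beta-1}$ is increasing and the limits of $|g|^2 w_{\alpha,\beta-1}$ at $\pm1$ vanish, so integration by parts gives $\int d(|g|^2)w_{\alpha,\beta-1}\,dx = -\int |g|^2 w_{\alpha,\beta-1}'\,dx<0$.

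Finally, assembling the integrated relation, the left-hand side is a sum of three terms: the (negative) boundary term, the term $A\int P_{n-1}^{(\alpha,\beta)}(\overline g+g)w_{\alpha,\beta-1}$ which is $2A\Re c_{n-1}\|P_{n-1}^{(\alpha,\beta)}\|^2<0$, and the term $\mu^{-1}\int P_n^{(\alpha,\beta)}\overline g\,w_{\alpha,\beta-1}+\overline\mu^{-1}\int P_n^{(\alpha,\beta)}g\,w_{\alpha,\beta-1}=0$; the right-hand side is $(\mu^{-1}+\overline\mu^{-1})\int|g|^2 w_{\alpha,\beta-1}\,dx = \tfrac{2\Re\mu}{|\mu|^2}\int|g|^2 w_{\alpha,\beta-1}\,dx$. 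Since the integral on the right is strictly positive and the left-hand side is strictly negative, $\Re\mu<0$, which is the claim. The main obstacle I anticipate is verifying positivity of the weight derivative $w_{\alpha,\beta-1}'$ on the whole interval for the sub-range $0<\beta<1$ (the endpoint value $2(\beta-1)$ being negative means one must argue via the other endpoint and linearity that the relevant product appearing after factoring out the positive $w_{\alpha-1,\beta-2}$ remains nonnegative on $[-1,1]$); everything else is a direct transcription of the previous lemma's argument. If that positivity genuinely fails for small $\beta$, the honest fix is to restrict to $\beta\ge 1$ here and recover the full range $-1<\alpha<0<\beta$ later by combining with Corollary~\ref{cor:main}, exactly as was done for Conjecture~\ref{con:main}.
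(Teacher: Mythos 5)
There is a genuine gap: the transcription of Lemma~\ref{lemma:f_stable} that you propose does not survive the asymmetry between $f$ and $g$, and two of your key sign claims are false. First, the weight identity you rely on is wrong: $(1-x)^{-1}w_{\alpha,\beta}=w_{\alpha-1,\beta}$, \emph{not} $w_{\alpha,\beta-1}$. With your choice $w_{\alpha,\beta-1}$ the factorisation $g=(1-x)\sum_k c_kP_k^{(\alpha,\beta)}$ produces $\int P_m^{(\alpha,\beta)}\,g\,w_{\alpha,\beta-1}\,dx=\int P_m^{(\alpha,\beta)}\sum_k c_kP_k^{(\alpha,\beta)}w_{\alpha+1,\beta-1}\,dx$, and orthogonality is simply unavailable, so neither the vanishing of the $P_n$-term nor the evaluation of the $P_{n-1}$-term goes through. (Incidentally, the weight $w_{\alpha-1,\beta}$ of Lemma~\ref{lemma:f_stable} is unproblematic for $-1<\alpha<0$: its derivative $\bigl(\beta(1-x)-(\alpha-1)(1+x)\bigr)w_{\alpha-2,\beta-1}$ is positive and $|g|^2w_{\alpha-1,\beta}\sim(1-x)^{\alpha+1}\to0$ at $x=1$ once $g(1;\mu)=0$, so the motivation for switching the shift to $\beta$ is also off.) Second, and more fundamentally, the $\mu$-factors land on the wrong terms. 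The differential equation for $g$ is $\mu g'+\mu P_n^{(\alpha,\beta)}+AP_{n-1}^{(\alpha,\beta)}=g$; after dividing by $\mu$ it is the $AP_{n-1}$ term, not the $P_n$ term, that acquires the factor $1/\mu$. So the term that vanishes by orthogonality is the harmless one, while the surviving $P_{n-1}$-term carries $\overline{g}/\mu+g/\overline{\mu}$. Worse, the leading coefficient of $g$ in $x$ is $\mu\cdot\lc\bigl(P_n^{(\alpha,\beta)}\bigr)$ (the $A$ now sits on $P_{n-1}$), so $c_{n-1}=-\kappa\mu$ with $\kappa>0$ is \emph{complex}, not a negative real; your claim $c_{n-1}<0$ and the evaluation $2A\Re c_{n-1}\|P_{n-1}\|^2$ are both incorrect. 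The surviving term is proportional to $\overline{c_{n-1}}/\mu+c_{n-1}/\overline{\mu}=-\kappa\bigl(\overline{\mu}/\mu+\mu/\overline{\mu}\bigr)=-2\kappa\,\Re(\mu^2)/|\mu|^2$, which has no fixed sign, so the inequality chain collapses exactly at the point where the conclusion $\Re\mu<0$ should emerge.

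This is why the paper's proof of Lemma~\ref{lemma:g_stable} is \emph{not} a transcription of Lemma~\ref{lemma:f_stable}: it multiplies the differential equation by $\overline{g'}\,w_{\alpha,\beta}$ rather than by $\overline{g}$ times a shifted weight. Then $\int P_n^{(\alpha,\beta)}g'\,w_{\alpha,\beta}\,dx=0$ because $\deg g'\le n-1$ (no factorisation of $g$ is needed for this term), while the $P_{n-1}$-component of $g'$ has coefficient $n\mu\,\lc\bigl(P_n^{(\alpha,\beta)}\bigr)/\lc\bigl(P_{n-1}^{(\alpha,\beta)}\bigr)$, so the $AP_{n-1}$-term contributes $(\mu+\overline{\mu})$ times a \emph{positive} constant times $\|P_{n-1}^{(\alpha,\beta)}\|^2$, which combines with $(\mu+\overline{\mu})\int|g'|^2w_{\alpha,\beta}\,dx$ on the left; the right-hand side $\int(g\overline{g'}+\overline{g}g')w_{\alpha,\beta}\,dx=-\int|g|^2w_{\alpha,\beta}'\,dx$ is negative for $-1<\alpha<0<\beta$, and $\Re\mu<0$ follows. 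If you want to salvage your approach you would need a device that converts the $P_{n-1}$-term into a real multiple of $\mu+\overline{\mu}$; multiplying by $\overline{g}$ cannot do that here.
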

\begin{proof}
    This proof is analogous to the proofs of Theorem~\ref{th:4} and Lemma~\ref{lemma:f_stable}.
    From the definition of~$g(x,\mu)$ we have
    \[
      \mu\frac{dg(x;\mu)}{dx} + \mu P_n^{(\alpha,\beta)}(x)
          + A P_{n-1}^{(\alpha,\beta)}(x) = g(x;\mu)
    \]
    which gives us (we put $g\coloneqq g(x;\mu)$ and
    $g'\coloneqq\frac{dg(x;\mu)}{dx}$ for brevity's sake)
    \[
      \mu g' \overline{g'} w_{\alpha,\beta}
          + \mu P_n^{(\alpha,\beta)}(x) \overline{g'} w_{\alpha,\beta} 
          + A P_{n-1}^{(\alpha,\beta)}(x)\overline{g'} w_{\alpha,\beta}
      = g\overline{g'}w_{\alpha,\beta}
    \]
    after multiplication by $\overline{g'}w_{\alpha,\beta}$. 
    Adding to the equation its complex conjugate and integrating yields
    \begin{multline}\label{eq:int_grel}
      (\mu+\overline{\mu})\int_{-1}^1 |g'|^2 w_{\alpha,\beta}\,dx
          + \int_{-1}^1 P_n^{(\alpha,\beta)}(x)
            \left(\mu \overline{g'} + \overline{\mu}g'\right)w_{\alpha,\beta}\,dx
          + A\int_{-1}^1 P_{n-1}^{(\alpha,\beta)}(x)\left(\overline{g'} + g' \right) w_{\alpha,\beta}\,dx
          \\= \int_{-1}^1 \left(g\overline{g'}+\overline{g}g'\right)w_{\alpha,\beta}\,dx.
    \end{multline}
    Observe that~$g$ is a polynomial of degree~$n$ in~$x$ and its leading coefficient is given
    by~$P_n^{(\alpha,\beta)}(x)\cdot\mu$. Consequently, substituting the explicit expression
    for~$\lc\left(P_n^{(\alpha,\beta)}(x)\right)$ from the formula~\eqref{eq:def_Pn} gives
    \[
      \lc (g')
      =n\lc\left(P_n^{(\alpha,\beta)}(x)\right)\mu
      = \frac{(2n+\alpha+\beta-1)_2}{2(n+\alpha+\beta)}
        \cdot \frac{(n+\alpha+\beta)_{n-1}}{(n-1)!\,2^{n-1}}\mu
      = \frac{(2n+\alpha+\beta-1)_2}{2(n+\alpha+\beta)}
        \lc\left(P_{n-1}^{(\alpha,\beta)}(x)\right)\mu.
    \]
    This allows us to calculate the third summand on the left-hand side of~\eqref{eq:int_grel}:
    \begin{multline}\label{eq:int_grel1}
    \int_{-1}^1 P_{n-1}^{(\alpha,\beta)}(x)\left(\overline{g'} + g' \right) w_{\alpha,\beta}\,dx
    =\int_{-1}^1 P_{n-1}^{(\alpha,\beta)}(x)
        \frac{(2n+\alpha+\beta-1)_2}{2(n+\alpha+\beta)}
        \lc\left(P_{n-1}^{(\alpha,\beta)}(x)\right)
        \left(\overline{\mu} + \mu \right)x^{n-1}w_{\alpha,\beta}\,dx\\
    =\left(\overline{\mu} + \mu \right)
        \frac{(2n+\alpha+\beta-1)_2}{2(n+\alpha+\beta)}
        \int_{-1}^1 \left(P_{n-1}^{(\alpha,\beta)}(x)\right)^2
    w_{\alpha,\beta}\,dx.
    \end{multline}
    Additionally, we have
    \begin{equation}\label{eq:int_grel2}
        \int_{-1}^1 P_n^{(\alpha,\beta)}(x) g' w_{\alpha,\beta}\,dx
        =0.
    \end{equation}

    Take $\mu$ so that $g(1;\mu)=0$. Then
    \[
    w_{\alpha,\beta}'=\left(\beta(1-x)-\alpha(1+x)\right)\cdot w_{\alpha-1,\beta-1} >0
    \quad\text{and}\quad
    \lim_{x\to-1+} |g|^2 w_{\alpha,\beta} =\lim_{x\to1-} |g|^2 w_{\alpha,\beta}=0
    \]
    since $-1<\alpha<0$ and $\beta>0$. Integrating by parts we obtain
    \begin{equation}\label{eq:int_grel3}
        \int_{-1}^1\left(g\overline{g'}+\overline{g}g'\right) w_{\alpha,\beta}\,dx
        = - \int_{-1}^1 |g|^2 w'_{\alpha,\beta} \,dx < 0.
    \end{equation}
    Now let us bring together the relations~\eqref{eq:int_grel}--\eqref{eq:int_grel3}:
    \[
      (\mu+\overline{\mu})\int_{-1}^1 \left(|g'|^2
          + A\frac{(2n+\alpha+\beta-1)_2}{2(n+\alpha+\beta)} \left(P_{n-1}^{(\alpha,\beta)}(x)\right)^2
      \right)w_{\alpha,\beta}\,dx
      = - \int_{-1}^1 |g|^2 w'_{\alpha,\beta} \,dx,
      \quad\text{hence}\quad
      2\Re\mu=\mu+\overline{\mu}<0.
    \]
    That is, any zero~$\mu$ of the polynomial~$g(1;\mu)$ resides in the left half of the complex
    plane.
\end{proof}

%\setstretch{1.15}
\begin{corollary}\label{cr:main_2}
    For any positive~$A$, the zeros of the polynomials
    \begin{equation}\label{eq:r_comb_2}
        2A\phi_n^{(\alpha,\beta)}(\mu)+ \left(n+\alpha+\beta\right)\mu\phi_{n-2}^{(\alpha+1,\beta+1)}(\mu)
        \quad\text{and}\quad
        A\left(n+\alpha+\beta+1\right)\phi_{n-1}^{(\alpha+1,\beta+1)}(\mu) + 2\phi_{n-1}^{(\alpha,\beta)}(\mu)
    \end{equation}
    are interlacing provided that $-1<\alpha<1$ and~$\beta>0$. If in addition $-1<\alpha<0$, the
    zeros of the polynomials
    \begin{equation}\label{eq:r_comb_3}
        \left(n+\alpha+\beta+1\right)\mu \phi_{n-1}^{(\alpha+1,\beta+1)}(\mu)+ 2A\phi_{n-1}^{(\alpha,\beta)}(\mu)
        \quad\text{and}\quad
        2\phi_{n}^{(\alpha,\beta)}(\mu) + A\left(n+\alpha+\beta\right)\phi_{n-2}^{(\alpha+1,\beta+1)}(\mu)
    \end{equation}
    are also interlacing.
\end{corollary}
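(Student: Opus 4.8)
The plan is to read off both interlacing statements from the Hermite--Biehler decomposition of the stable polynomials $f(1;\mu)$ and $g(1;\mu)$ furnished by Lemmas~\ref{lemma:f_stable} and~\ref{lemma:g_stable}. The first step is to understand the even/odd splitting of the auxiliary one-variable polynomial
\[
\Psi_m(\mu)\colonequals\sum_{k=0}^{m}\left.\frac{d^{k}}{dx^{k}}P_m^{(\alpha,\beta)}(x)\right|_{x=1}\mu^k.
\]
By the very definition~\eqref{eq:phi_expl}, the part of $\Psi_m$ collecting the even powers of $\mu$ is exactly $\phi_m^{(\alpha,\beta)}(\mu^2)$. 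For the odd part one iterates the derivative formula~\eqref{eq:diff_P} in the case $m=1$, namely $\bigl(P_m^{(\alpha,\beta)}\bigr)'=\tfrac12(m+\alpha+\beta+1)P_{m-1}^{(\alpha+1,\beta+1)}$, to see that $\left.\frac{d^{2k+1}}{dx^{2k+1}}P_m^{(\alpha,\beta)}\right|_{x=1}=\tfrac12(m+\alpha+\beta+1)\left.\frac{d^{2k}}{dx^{2k}}P_{m-1}^{(\alpha+1,\beta+1)}\right|_{x=1}$; hence the odd part equals $\tfrac12(m+\alpha+\beta+1)\,\mu\,\phi_{m-1}^{(\alpha+1,\beta+1)}(\mu^2)$, and altogether $\Psi_m(\mu)=\phi_m^{(\alpha,\beta)}(\mu^2)+\tfrac12(m+\alpha+\beta+1)\,\mu\,\phi_{m-1}^{(\alpha+1,\beta+1)}(\mu^2)$.

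The second step is to substitute $x=1$ into the definitions of $f$ and $g$, which gives $f(1;\mu)=A\,\Psi_n(\mu)+\mu\,\Psi_{n-1}(\mu)$ and $g(1;\mu)=\mu\,\Psi_n(\mu)+A\,\Psi_{n-1}(\mu)$, and then to sort the resulting expression by parity of $\mu$, using $(n-1)+\alpha+\beta+1=n+\alpha+\beta$ in the contribution of $\Psi_{n-1}$. This writes $f(1;\mu)=p(\mu^2)+\mu\,q(\mu^2)$, where $2p$ is precisely the first and $2q$ precisely the second polynomial in~\eqref{eq:r_comb_2}; likewise $g(1;\mu)=\widetilde p(\mu^2)+\mu\,\widetilde q(\mu^2)$, where $2\widetilde p$ and $2\widetilde q$ are the first and the second polynomial in~\eqref{eq:r_comb_3}, respectively. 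These identifications are a short matching of coefficients once the formula for $\Psi_m$ above is in place.

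The final step invokes Hermite--Biehler. Since $\alpha,\beta$ are real and $A>0$, the polynomial $f(1;\mu)$ has real coefficients, and by Lemma~\ref{lemma:f_stable} it is Hurwitz-stable when $-1<\alpha<1$ and $\beta,A>0$; Theorem~\ref{th:H-B} then forces the zeros of $p(\mu)$ and $\mu q(\mu)$ to be nonpositive and strictly interlacing, hence the zeros of $2p$ and $2q$ interlace, which is the first assertion. Running the identical argument on $g(1;\mu)$ with Lemma~\ref{lemma:g_stable}, valid for $-1<\alpha<0$ and $\beta,A>0$, yields the interlacing of $2\widetilde p$ and $2\widetilde q$, i.e.\ the second assertion. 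I do not expect a genuine obstacle: the only substantive point is the identification of the odd part of $\Psi_m$ with a parameter-shifted $\phi$, which is exactly what~\eqref{eq:diff_P} delivers, while the stability input and the Hermite--Biehler passage are already available; the remaining work is purely the even/odd bookkeeping described above.
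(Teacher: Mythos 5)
Your proposal is correct and follows essentially the same route as the paper: split $f(1;\mu)$ and $g(1;\mu)$ into even and odd parts, use the derivative formula~\eqref{eq:diff_P} to rewrite the odd-order derivatives as parameter-shifted $\phi$'s (so that the halves become exactly the polynomials in~\eqref{eq:r_comb_2} and~\eqref{eq:r_comb_3} up to a factor $2$), and then apply Theorem~\ref{th:H-B} together with the stability supplied by Lemmas~\ref{lemma:f_stable} and~\ref{lemma:g_stable}. Your intermediate polynomial $\Psi_m$ is just a notational repackaging of the same even/odd bookkeeping the paper carries out directly on $f(x;\mu)$ and $g(x;\mu)$.
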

\begin{proof}
    To get the assertion we apply the relation~\eqref{eq:diff_P} to the even and odd parts of
    the polynomials~$f(x;\mu)$ and~$g(x;\mu)$. The even part of~$f(x;\mu)$ is
    \begin{align*}
        \frac{f(x;\mu)+f(x;-\mu)}{2}
        &=
        A \sum_{k=0}^{[n/2]} \mu^{2k} \frac{d^{2k}}{dx^{2k}}P_n^{(\alpha,\beta)}(x)
        + \mu^2\sum_{k=0}^{[n/2]} \mu^{2k} \frac{d^{2k+1}}{dx^{2k+1}}P_{n-1}^{(\alpha,\beta)}(x)
        \\
        &= A \sum_{k=0}^{[n/2]} \mu^{2k} \frac{d^{2k}}{dx^{2k}}P_n^{(\alpha,\beta)}(x)
         + \frac{n+\alpha+\beta}{2}
           \mu^2\sum_{k=0}^{[n/2]-1} \mu^{2k} \frac{d^{2k}}{dx^{2k}}P_{n-2}^{(\alpha+1,\beta+1)}(x)
       .
    \end{align*}
    Analogously, for the odd part we have
    \begin{align*}
        \frac{f(x;\mu)-f(x;-\mu)}{2}
        &=
        A \sum_{k=0}^{[(n-1)/2]} \mu^{2k+1}
            \frac{d^{2k+1}}{dx^{2k+1}}P_n^{(\alpha,\beta)}(x)
            + \sum_{k=0}^{[(n-1)/2]} \mu^{2k+1}
            \frac{d^{2k}}{dx^{2k}}P_{n-1}^{(\alpha,\beta)}(x)
        \\
        &= A \frac{n+\alpha+\beta+1}{2}
          \sum_{k=0}^{[(n-1)/2]} \mu^{2k+1}
            \frac{d^{2k}}{dx^{2k}}P_{n-1}^{(\alpha+1,\beta+1)}(x)
            + \sum_{k=0}^{[(n-1)/2]} \mu^{2k+1}
            \frac{d^{2k}}{dx^{2k}}P_{n-1}^{(\alpha,\beta)}(x).
    \end{align*}
    The same manipulations with~$g(x;\mu)$ give
    \begin{align*}
        \frac{g(x;\mu)+g(x;-\mu)}{2}
        &=
        \sum_{k=0}^{[(n-1)/2]} \mu^{2k+2} \frac{d^{2k+1}}{dx^{2k+1}}P_n^{(\alpha,\beta)}(x)
        + A \sum_{k=0}^{[(n-1)/2]} \mu^{2k} \frac{d^{2k}}{dx^{2k}}P_{n-1}^{(\alpha,\beta)}(x)
        \\
        &= \frac{n+\alpha+\beta+1}{2}\mu^2 \sum_{k=0}^{[(n-1)/2]} \mu^{2k}
            \frac{d^{2k}}{dx^{2k}}P_{n-1}^{(\alpha+1,\beta+1)}(x)
         + A \sum_{k=0}^{[(n-1)/2]} \mu^{2k} \frac{d^{2k}}{dx^{2k}}P_{n-1}^{(\alpha,\beta)}(x)
        \quad\text{and}\\
        \frac{g(x;\mu)-g(x;-\mu)}{2}
        &=
        \sum_{k=0}^{[n/2]} \mu^{2k+1}
            \frac{d^{2k}}{dx^{2k}}P_n^{(\alpha,\beta)}(x)
            + A \sum_{k=0}^{[(n-1)/2]} \mu^{2k+1}
            \frac{d^{2k+1}}{dx^{2k+1}}P_{n-1}^{(\alpha,\beta)}(x)
        \\
        &= \mu \left(
          \sum_{k=0}^{[n/2]} \mu^{2k}
            \frac{d^{2k}}{dx^{2k}}P_{n}^{(\alpha,\beta)}(x)
            +  A \frac{n+\alpha+\beta}{2} \sum_{k=0}^{[n/2]-1} \mu^{2k}
            \frac{d^{2k}}{dx^{2k}}P_{n-2}^{(\alpha+1,\beta+1)}(x)\right).
    \end{align*}
    The polynomials~$f(1;\mu)$ and~$g(1;\mu)$ are stable by Lemma~\ref{lemma:f_stable} and
    Lemma~\ref{lemma:g_stable}, respectively. Thus, the pairs of polynomials mentioned
    in~\eqref{eq:r_comb_2} and~\eqref{eq:r_comb_3} have interlacing zeros by
    Theorem~\ref{th:H-B}.
\end{proof}

\begin{lemma}[see e.g.~{\cite[Lemma 3.4]{Wa}} or~{\cite[Lemma 3.5]{CW}}]\label{lemma:2pair_interl}
    Let real polynomials~$p(x)$ and~$q(x)$ such that $p(0),q(0)>0$ have only negative zeros.
    Then $\left(p(x),xq(x)\right)$ is a real pair if and only if the combinations
    \begin{equation}\label{eq:r_comb_pq}
      r_1(x)\colonequals r_1(x;A,B)\colonequals Ap(x)+Bxq(x)
        \quad\text{and}\quad
      r_2(x)\colonequals r_2(x;C,D)\colonequals Cp(x)+Dq(x)
    \end{equation}
    are nonzero outside the real line for all~$A,B,C,D>0$.
\end{lemma}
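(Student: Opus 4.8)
The two directions are of different natures: necessity is essentially a reading-off of the interlacing pattern, while sufficiency is where the hypotheses on $r_1$ and on $r_2$ must genuinely be combined, and it is the harder half.

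For necessity, suppose $\bigl(p(x),xq(x)\bigr)$ is a real pair. Then every real combination of $p$ and $xq$ has only real zeros by definition of a real pair, so in particular $r_1(x;A,B)$ is nonzero off the real axis for all $A,B>0$. For $r_2$ I would first check that $\bigl(p(x),q(x)\bigr)$ is itself a real pair: setting $d\colonequals\gcd(p,xq)$ and noting $x\nmid p$ since $p(0)>0$, whence $d=\gcd(p,q)$, the polynomials $p/d$ and $x\cdot(q/d)$ strictly interlace and have only nonpositive zeros, of which $0$ is the largest and belongs to $x\cdot(q/d)$ alone; deleting this zero turns $x\cdot(q/d)$ into $q/d$ and leaves the remaining zeros still alternating with those of $p/d$, so $p/d$ and $q/d$ strictly interlace. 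Hence every $Cp+Dq$ is real-rooted, and $r_2(x;C,D)$ is in particular nonzero off the real axis for $C,D>0$.

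For sufficiency I would prove that $Ep(x)+F\,xq(x)$ has only real zeros for all real $E,F$. When $EF\ge0$ this polynomial is, up to an overall sign, one of $p$, $xq$, or an $r_1$-combination with positive coefficients, hence real-rooted by hypothesis. The remaining case reduces, after rescaling and a sign change, to showing $p(x)-s\,xq(x)$ is real-rooted for every $s>0$. First, the two hypotheses force $\deg q\le\deg p\le\deg q+1$: a larger gap would let $p+sxq$ (as $s\to0^{+}$ or $s\to\infty$) or $p+\sigma q$ (as $\sigma\to\infty$) acquire zeros running off to infinity along a ray, hence non-real ones, contradicting the hypothesis on $r_1$ or on $r_2$. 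Thus, as $s$ runs over $[0,+\infty]$, the degree of $p-s\,xq$ is constant on $(0,\infty)$ apart from the single value $s^{*}$ (present only when $\deg p=\deg q+1$) at which the leading terms cancel and one zero passes harmlessly through infinity. The zeros of $p-s\,xq$ depend continuously on $s$, are real at $s=0$ ($=p$) and at $s=+\infty$ ($=xq$), and --- by the hypothesis on $r_1$ and the behaviour at the two endpoints --- at most one of them is ever positive; so real-rootedness could be lost only through a \emph{negative} double zero $x_{0}$ occurring at some $s_{0}\in(0,\infty)$, where two negative zeros coalesce before leaving the real line.

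The crux is to rule out this coalescence, and this is where the hypothesis on $r_2$ enters. Put $\sigma_{0}\colonequals-s_{0}x_{0}>0$; then $x_{0}$ is a zero of $p+\sigma_{0}q$, an $r_2$-combination, which by the hypothesis on $r_2$ and the positivity of $p$ and $q$ on $[0,\infty)$ has exactly $\deg(p+\sigma_{0}q)$ simple negative zeros. I would then play off two descriptions of the negative zeros of $p-s\,xq$: on one side, their number equals the number of intersections of the ray $\{\sigma=-sx:\,x<0\}$ --- rotating from the $x$-axis toward the $\sigma$-axis as $s$ increases --- with the graph $\sigma=-p(x)/q(x)$, $x<0$; on the other, the hypothesis on $r_2$ says that every horizontal line $\sigma=\text{const}\ge0$ meets that same graph in the maximal number of points, distributed in a prescribed way among the intervals between consecutive zeros of $q$. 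Matching the two leaves no slack for a pair of negative zeros to merge and leave the real axis. Carrying out this bookkeeping --- tracking how the real zeros of the two one-parameter families $p+\sigma q$ and $p-s\,xq$ are apportioned among the gaps between consecutive zeros of $q$, so that the $r_1$- and $r_2$-hypotheses reinforce one another --- is the step I expect to be the main obstacle. (Equivalently, one may pass through the Hermite-Biehler Theorem~\ref{th:H-B}: strict interlacing of $p$ and $xq$ with nonpositive zeros is precisely the Hurwitz stability of $p(z^{2})+z\,q(z^{2})$, which one would derive from the two hypotheses by an argument-principle count; the combinatorial heart is unchanged.)
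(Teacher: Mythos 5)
Your necessity direction is correct and complete: deriving that $(p,q)$ is itself a real pair by deleting the zero at the origin from $xq$ is a legitimate and somewhat more elementary route than the paper's, which instead writes $q/p$ as a partial fraction with nonnegative residues and reads off $\sign\Im\bigl(q(z)/p(z)\bigr)=-\sign\Im\bigl(zq(z)/p(z)\bigr)=-\sign\Im z$.

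The sufficiency direction, however, has a genuine gap, and you flag it yourself: the ``bookkeeping'' that is supposed to rule out two negative zeros of $p(x)-s\,xq(x)$ coalescing is never carried out, and the one concrete observation you make in that direction does not suffice. If $x_0<0$ is a double zero of $p-s_0xq$, then indeed $p(x_0)+\sigma_0q(x_0)=0$ with $\sigma_0=-s_0x_0>0$, but $x_0$ need not be a multiple zero of $p+\sigma_0q$ (the two derivative conditions differ by $s_0q(x_0)$), and in any case the hypothesis only gives real-rootedness of $p+\sigma_0q$, not simplicity of its zeros --- simplicity is part of what is being proved, not an assumption. So no contradiction is reached, and the heart of the lemma remains open in your write-up. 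The paper sidesteps this entire difficulty by never examining combinations with a negative coefficient: it lets $B/A$ (resp.\ $D/C$) range over $(0,\infty)$ and observes that each zero $\zr_i(r_1)$ is trapped in the open interval $I_i$ between $\zr_i(p)$ and $\zr_{i-1}(q)$ --- it can never meet a zero of $p$ or of $xq$, since two combinations $r_1(x;A,B)$ and $r_1(x;\tilde A,\tilde B)$ with $B/A\ne\tilde B/\tilde A$ share no zero when $p$ and $q$ are coprime --- and likewise $\zr_i(r_2)$ is trapped between $\zr_i(p)$ and $\zr_i(q)$; the same trapping argument bounds $|\deg p-\deg q|$. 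Because the intervals $I_i$ and $J_i$ are disjoint while every negative point lies in some $[\zr_i(q),\zr_{i-1}(q))\subset\overline{J_i}\cup I_i$, the only consistent configuration is $\zr_i(q)\le\zr_i(p)\le\zr_{i-1}(q)$, i.e.\ $p$ and $xq$ interlace, which by the paper's definition is already equivalent to $(p,xq)$ being a real pair --- no combination $Ep+Fxq$ with $EF<0$ ever needs to be tested. That interval-confinement argument is precisely the combinatorial content missing from your proposal.
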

Recall that $p(x)$ and~$xq(x)$ is a real pair whenever they interlace (non-strictly if $p(x)$
and~$xq(x)$ have a common zero). For $p$ and~$q$ as in this lemma we thus have
$\deg q\leqslant\deg p\leqslant1+\deg q$ automatically.
\begin{proof}
    Without loss of generality, assume in the proof that the polynomials~$p$ and~$q$ have no
    common zeros: if not, the zeros are real and we can factor them out of~$r_1$ and $r_2$. The
    presence of common zeros prevents $p$ and~$q$ from being strictly interlacing.

\setstretch{1.05}
    Let $\left(p(x),xq(x)\right)$ be a real pair. The polynomials~$p$ and~$xq$ interlace exactly
    when between each pair of consecutive zeros~$\zr_i(p),\zr_{i-1}(p)$ the polynomial~$q$ has
    exactly one change of sign, $i=2,\dots,\deg p$. That is, the interlacing property of these
    polynomials is equivalent to
    \[
      R(z)\colonequals\frac{q(z)}{p(z)}=\gamma+\sum_{i=1}^{n}\frac{A_i}{z-\zr_i(p)}
      \quad\text{implying}\quad
      zR(z) = \frac{zq(z)}{p(z)}
        =\gamma z+\sum_{i=1}^{n}A_i + \sum_{i=1}^{n}\frac{\zr_i(p)A_i}{z-\zr_i(p)},
    \]
    where $\gamma\ge 0$ and the residues~$A_i=\frac{q(\zr_i(p))}{p'(\zr_i(p))}$ are positive for
    all~$i$. The straightforward check shows that
    \[\sign\Im R(z)=-\sign\Im \left(zR(z)\right)=-\sign\Im z\]
    for any~$z\in\mathbb{C}$ such that $p(z)\ne 0$.
    Consequently, the combinations~\eqref{eq:r_comb_pq} have zeros only on the real line.

    Conversely, let for any fixed $A,B,C,D>0$ the polynomials~$r_1(x;A,B)$ and~$r_2(x;C,D)$ have
    only real zeros. The zeros of~$r_1$, $r_2$ are all negative since all their coefficients are
    positive. Moreover, $p(x)$ and $q(x)$ are coprime, and therefore
    \[
    \begin{aligned}
        r_1(x_*;A,B)&=0&&\implies&&\sign p(x_*)=\sign q(x_*)=\sign r_2(x_*;C,D)\ne 0
        &&\text{and}&&
        r_1(x_*;\tilde{A},\tilde{B})\ne 0,\\
        r_2(x_\#;C,D)&=0&&\implies&&\sign p(x_\#)=-\sign q(x_\#)=\sign r_1(x_\#;A,B)\ne 0
        &&\text{and}&&
        r_2(x_\#;\tilde{C},\tilde{D})\ne 0,
    \end{aligned}
    \]
    where~$B\tilde{A}\ne A\tilde{B}$ and~$D\tilde{C}\ne C\tilde{D}$.

    The roots of a polynomial depend continuously on its coefficients. Therefore, when the
    ratio~$B/A$ comes close to zero (or to infinity), the roots of~$r_1$ tend to the roots
    of~$p(x)$ (or to the roots of~$xq(x)$, respectively). For $i=1,\dots,\deg r_1$ and $A,B>0$
    the zero $\zr_i(r_1)$ can never coincide with a root of~$p(x)$ or~$xq(x)$ thus remaining in
    the interval
    \[
    I_i\colonequals\bigcup_{A/B>0}\zr_i(r_1)
    =\left(\min\left\{\zr_i(p),\zr_{i-1}(q)\right\},
        \max\left\{\zr_i(p),\zr_{i-1}(q)\right\}\right).
    \]
    When~$k\colonequals\deg p-\deg q-1 \ne 0$ there exist~$|k|$ surplus roots of~$r_1(x)$ which
    disappear as~$r_1(x)$ becomes proportional to~$p(x)$ or~$xq(x)$. Being negative, these roots
    must tend to~$-\infty$. Since they can never meet a root of~$p(x)$ or~$q(x)$, they run the
    whole ray~$\left(-\infty,\zr_{\deg p}(p)\right)$ if $k>0$ and
    $\left(-\infty,\zr_{\deg q}(q)\right)$ if $k<0$. This implies~$|k|\le 1$,
    because~$r_1(x;A,B)$ and $r_1(x;\tilde{A},\tilde{B})$ have no common zeros
    unless~$B/A = \tilde{B}/\tilde{A}$.
    
    For the polynomial~$r_2$ we analogously obtain
    \[
    J_i\colonequals\bigcup_{C/D>0}\zr_i(r_2)
    =\left(\min\left\{\zr_i(q),\zr_i(p)\right\},
        \max\left\{\zr_i(q),\zr_i(p)\right\}\right)
    \]
    and~$|\deg p-\deg q|\le 1$. The last inequality together with~$|k|\le 1$ gives
    \( \deg q \le \deg p \le \deg q +1 \). For each~$z<0$ there exists $i=1,\dots,\deg q +1$
    such that~$z\in\left[\zr_i(q), \zr_{i-1}(q)\right)\subset \overline{J_i}\cup I_i$. Since
    $J_i$ and $I_i$ are disjoint, the only option is $\zr_i(q)\le \zr_i(p)\le \zr_{i-1}(q)$.
    This implies that the polynomial~$p(x)$ interlace~$xq(x)$ since $\deg p \le \deg q +1$.
\end{proof}

The next corollary complements the interlacing property of the polynomials
$\phi_n^{(\alpha,\beta)}(\mu)$ and $\phi_{n-1}^{(\alpha+1,\beta+1)}(\mu)$ (see the remark to
Theorem~\ref{th:Cs}).
\begin{corollary}\label{cr:interl_2}
    If $-1<\alpha<0$ and $\beta>0$ the pairs % of polynomials
    $\left(\phi_n^{(\alpha,\beta)}(\mu), \mu\phi_{n-2}^{(\alpha+1,\beta+1)}(\mu)\right)$ and
    $\left(\phi_{n}^{(\alpha,\beta)}(\mu), \mu\phi_{n}^{(\alpha+1,\beta+1)}(\mu)\right)$ possess
    the (strict) interlacing property.
\end{corollary}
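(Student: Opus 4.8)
\emph{Proof proposal.} The plan is to derive both interlacing statements from Corollary~\ref{cr:main_2} by way of Lemma~\ref{lemma:2pair_interl}. For the first pair I would apply the latter with $p\colonequals\phi_n^{(\alpha,\beta)}$ and $q\colonequals\phi_{n-2}^{(\alpha+1,\beta+1)}$, and for the second pair with $p\colonequals\phi_n^{(\alpha,\beta)}$ and $q\colonequals\phi_n^{(\alpha+1,\beta+1)}$ (here $n\ge4$, so that Corollary~\ref{cr:main_2} is available at both $n$ and $n+1$). In every case $p(0),q(0)>0$ because all coefficients are positive in the range $-1<\alpha<0$, $\beta>0$, and $p$, $q$ have only simple negative zeros: for $p$ by the remark to Theorem~\ref{th:Cs} (which covers $-1<\alpha<0$, $\beta>-2$), and for $q$ by Theorem~\ref{th:Cs} together with Corollary~\ref{cor:main}, whose hypotheses are met by the shifted parameters $\alpha+1\in(0,1)$, $\beta+1>0$. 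Thus we are in the setting of Lemma~\ref{lemma:2pair_interl}, and it remains to show that $Ap+B\mu q$ and $Cp+Dq$ have only real zeros for all $A,B,C,D>0$; the lemma then hands us that $(p,\mu q)$ is a real pair, i.e.\ non-strict interlacing.

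Now I would observe that these two families are, up to a positive factor, precisely the polynomials whose interlacing is asserted in Corollary~\ref{cr:main_2}. For the first pair, read that corollary as it stands: the first polynomial in~\eqref{eq:r_comb_2} is $2Ap+(n+\alpha+\beta)\mu q$ and the second polynomial in~\eqref{eq:r_comb_3} is $2p+A(n+\alpha+\beta)q$; since $n+\alpha+\beta$ is a fixed positive constant and $A$ runs over $(0,\infty)$, these sweep out all of $\{\,tp+\mu q: t>0\,\}$ and $\{\,p+sq: s>0\,\}$ respectively, so (after the obvious rescaling) $Ap+B\mu q$ and $Cp+Dq$ have only real zeros for every $A,B,C,D>0$. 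For the second pair, apply Corollary~\ref{cr:main_2} with $n$ replaced by $n+1$: then $\phi_{(n+1)-1}^{(\alpha+1,\beta+1)}=q$ and $\phi_{(n+1)-1}^{(\alpha,\beta)}=p$, so the first polynomial in~\eqref{eq:r_comb_3} becomes $(n+\alpha+\beta+2)\mu q+2Ap$ and the second polynomial in~\eqref{eq:r_comb_2} becomes $A(n+\alpha+\beta+2)q+2p$; the parameter restrictions needed by Corollary~\ref{cr:main_2} for both~\eqref{eq:r_comb_2} and~\eqref{eq:r_comb_3} are exactly $-1<\alpha<0$, $\beta>0$. Lemma~\ref{lemma:2pair_interl} then gives that $\left(\phi_n^{(\alpha,\beta)},\mu\phi_{n-2}^{(\alpha+1,\beta+1)}\right)$ and $\left(\phi_n^{(\alpha,\beta)},\mu\phi_n^{(\alpha+1,\beta+1)}\right)$ are real pairs.

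The remaining point---which I expect to be the delicate one---is to upgrade non-strict interlacing to strict, i.e.\ to show $p$ and $q$ are coprime (then $p$ and $\mu q$ are coprime as well, since $p(0)\ne0$). Suppose a common zero $\mu_0<0$ of $p$ and $q$ existed; by the simplicity noted above, $p'(\mu_0)\ne0$ and $q'(\mu_0)\ne0$. The interlacing in Corollary~\ref{cr:main_2} is strict, so each of the polynomials used above has only simple zeros. Differentiating a member $2Ap+c\mu q$ of the first family at $\mu_0$ (with $c>0$ a fixed constant, using $p(\mu_0)=q(\mu_0)=0$) yields $2Ap'(\mu_0)+c\mu_0q'(\mu_0)$, which must be nonzero for every $A>0$; hence $p'(\mu_0)$ and $c\mu_0q'(\mu_0)$ share a sign, and since $\mu_0<0$ this forces $p'(\mu_0)$ and $q'(\mu_0)$ to have opposite signs. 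Doing the same with a member $2p+cAq$ of the second family gives $2p'(\mu_0)+cAq'(\mu_0)\ne0$ for every $A>0$, whence $p'(\mu_0)$ and $q'(\mu_0)$ share a sign --- a contradiction. Therefore $p$ and $q$ are coprime and both pairs interlace strictly, as claimed.
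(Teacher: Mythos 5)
Your proposal is correct and follows essentially the same route as the paper: the paper's own proof is just the observation that Theorem~\ref{th:Cs} supplies the negative zeros, Corollary~\ref{cr:main_2} supplies the real-rootedness of both families of combinations, and Lemma~\ref{lemma:2pair_interl} then yields the two real pairs. You have merely made explicit two details the paper leaves implicit --- the shift $n\mapsto n+1$ in Corollary~\ref{cr:main_2} needed for the pair $\left(\phi_n^{(\alpha,\beta)},\mu\phi_n^{(\alpha+1,\beta+1)}\right)$, and the sign-of-derivative argument showing coprimality, which upgrades the real pair to strict interlacing --- and both are handled correctly.
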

\begin{proof}
    By Theorem~\ref{th:Cs}, all involved polynomials have only real nonpositive zeros.
    Corollary~\ref{cr:main_2} adds that the polynomials
    in~\eqref{eq:r_comb_2}--\eqref{eq:r_comb_3} have (strictly) interlacing zeros. Therefore,
    Lemma~\ref{lemma:2pair_interl} assures the asserted fact.
\end{proof}
% The further considerations relay on the following combination of the
% relations~\eqref{eq:fl_P_summ1} and~\eqref{eq:fl_P_summ2}:
% {\color[HTML]{2020b0}\\\textbf{This is for the sake of checking:}
%     \[
%     \begin{aligned}
%         \MoveEqLeft{(2n+\alpha+\beta+1) P_{n}^{(\alpha,\beta)}}
%         = (n+\alpha+\beta+1)P_n^{(\alpha,\beta+1)}+(n+\alpha)P_{n-1}^{(\alpha,\beta+1)}\\
%         ={}& \frac{(n+\alpha+\beta+1)_2}{2n+\alpha+\beta+2}P_n^{(\alpha+1,\beta+1)}
%         -\frac{(n+\beta)(n+\alpha+\beta+1)}{2n+\alpha+\beta+2}P_{n-1}^{(\alpha+1,\beta+1)}\\
%         &{}+\frac{(n+\alpha)(n+\alpha+\beta+1)}{2n+\alpha+\beta}P_{n-1}^{(\alpha+1,\beta+1)}
%         -\frac{(n+\alpha)(n+\beta)}{2n+\alpha+\beta}P_{n-2}^{(\alpha+1,\beta+1)}\\
%         ={}&
%         \frac{(n+\alpha+\beta+1)_2}{2n+\alpha+\beta+2}P_n^{(\alpha+1,\beta+1)}
%         -(\beta-\alpha)\frac{(n+\alpha+\beta+1)(2n+\alpha+\beta+1)^2}{(2n+\alpha+\beta)_3}
%         P_{n-1}^{(\alpha+1,\beta+1)}
%         -\frac{(n+\alpha)(n+\beta)}{2n+\alpha+\beta}P_{n-2}^{(\alpha+1,\beta+1)}.
%     \end{aligned}
%     \]}
% \begin{multline} \label{eq:P_ab_to_Pa1b1}
%     (2n+\alpha+\beta+1) P_{n}^{(\alpha,\beta)}=
%         \frac{(n+\alpha+\beta+1)_2}{2n+\alpha+\beta+2}P_n^{(\alpha+1,\beta+1)}\\
%         -(\beta-\alpha)\frac{(n+\alpha+\beta+1)(2n+\alpha+\beta+1)^2}{(2n+\alpha+\beta)_3}
%         P_{n-1}^{(\alpha+1,\beta+1)}
%         -\frac{(n+\alpha)(n+\beta)}{2n+\alpha+\beta}P_{n-2}^{(\alpha+1,\beta+1)}.
% \end{multline}
\begin{theorem}\label{thm:cb_main_2}
    If $-1<\alpha<0<\beta$ and $n=5,6,\dots$, then the polynomial
    $\phi_n^{(\alpha+1,\beta+1)}(\mu)$ interlaces $\phi_{n-2}^{(\alpha+1,\beta+1)}(\mu)$, and
    the polynomial $\phi_n^{(\alpha,\beta)}(x)$ interlaces $\phi_{n-2}^{(\alpha,\beta)}(x)$.
\end{theorem}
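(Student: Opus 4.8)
The plan is to deduce the theorem from Corollary~\ref{cr:interl_2} together with the two-pair interlacing criterion of Lemma~\ref{lemma:2pair_interl}, now applied "in reverse". Corollary~\ref{cr:interl_2} tells us that, for $-1<\alpha<0<\beta$, each of the pairs $\bigl(\phi_n^{(\alpha,\beta)},\mu\phi_{n-2}^{(\alpha+1,\beta+1)}\bigr)$ and $\bigl(\phi_n^{(\alpha,\beta)},\mu\phi_n^{(\alpha+1,\beta+1)}\bigr)$ interlaces strictly; by Theorem~\ref{th:Cs} and the remark following it, all of $\phi_n^{(\alpha,\beta)}$, $\phi_{n-1}^{(\alpha,\beta)}$, $\phi_n^{(\alpha+1,\beta+1)}$, $\phi_{n-1}^{(\alpha+1,\beta+1)}$, $\phi_{n-2}^{(\alpha+1,\beta+1)}$ have only simple negative zeros, and $\phi_n^{(\alpha,\beta)}$ interlaces $\phi_{n-1}^{(\alpha+1,\beta+1)}$. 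So I would first assemble, via \eqref{eq:r_comb_2} and \eqref{eq:r_comb_3}, the full package of interlacings among the five polynomials $\phi_n^{(\alpha,\beta)}$, $\phi_{n-1}^{(\alpha,\beta)}$, $\phi_{n-2}^{(\alpha,\beta)}$, $\phi_n^{(\alpha+1,\beta+1)}$, $\phi_{n-1}^{(\alpha+1,\beta+1)}$, $\phi_{n-2}^{(\alpha+1,\beta+1)}$ that are already available.

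The heart of the argument is the following chain for the $(\alpha+1,\beta+1)$ family. Apply \eqref{eq:fl_diff1} (with parameters shifted appropriately) to express, say, $\phi_{n-1}^{(\alpha+1,\beta+1)}$ as a combination of $\phi_n^{(\alpha+1,\beta)}$ and $\mu(\phi_n^{(\alpha+1,\beta)})'$, and combine this with \eqref{eq:fl_summ1}–\eqref{eq:fl_summ4} and \eqref{eq:main_rel}; the point is that $\phi_n^{(\alpha+1,\beta+1)}$ and $\phi_{n-2}^{(\alpha+1,\beta+1)}$ should both appear as real combinations of a single real pair built out of polynomials we already control. Concretely, I expect that $\phi_n^{(\alpha+1,\beta+1)}$ interlaces $\phi_{n-1}^{(\alpha,\beta+1)}$ (an instance of the remark to Theorem~\ref{th:Cs}) and that the derivative relations reduce the claim $\phi_n^{(\alpha+1,\beta+1)}$ interlaces $\phi_{n-2}^{(\alpha+1,\beta+1)}$ to an interlacing already secured in Corollary~\ref{cr:interl_2} after passing to even/odd parts as in the proof of Corollary~\ref{cr:main_2}. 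Having that, I would use \eqref{eq:fl_summ1} (shifted) to write $\phi_{n-1}^{(\alpha,\beta)}$, $\phi_n^{(\alpha,\beta)}$ and $\phi_{n-2}^{(\alpha,\beta)}$ as real combinations of $\phi_n^{(\alpha+1,\beta+1)}$, $\phi_{n-1}^{(\alpha+1,\beta+1)}$, $\phi_{n-2}^{(\alpha+1,\beta+1)}$ and their derivatives, and then invoke Lemma~\ref{lemma:2pair_interl} a second time to transfer the interlacing of the $(\alpha+1,\beta+1)$ polynomials to $\phi_n^{(\alpha,\beta)}$ versus $\phi_{n-2}^{(\alpha,\beta)}$.

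The main obstacle I anticipate is bookkeeping the signs and degrees so that Lemma~\ref{lemma:2pair_interl} actually applies: that lemma requires a genuine real pair $(p,xq)$ with $p(0),q(0)>0$ and $\deg q\le\deg p\le\deg q+1$, and in the present setting the parities $[n/2]$ versus $[(n-1)/2]$ and the extra factors of $\mu$ coming from the even/odd decomposition (cf. the shape of \eqref{eq:r_comb_2}–\eqref{eq:r_comb_3}) have to be matched exactly, separately for even and odd~$n$. A secondary subtlety is ensuring that the pairs stay \emph{coprime} (strict interlacing rather than merely non-strict): this follows because a common zero of two of the $\phi$'s would, through \eqref{eq:main_rel} with two distinct values of~$A$, force a multiple zero somewhere, contradicting the simplicity guaranteed by Theorem~\ref{th:Cs}. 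Once the degree/sign ledger is in order, both halves of the statement — first for the $(\alpha+1,\beta+1)$ pair, then for the $(\alpha,\beta)$ pair — drop out of Lemma~\ref{lemma:2pair_interl} with no further computation.
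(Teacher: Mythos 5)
Your proposal correctly identifies Corollary~\ref{cr:interl_2} as the essential new ingredient, but the step that would actually prove the theorem is missing: everything resting on Lemma~\ref{lemma:2pair_interl} ``in reverse'' and on exhibiting $\phi_n^{(\alpha+1,\beta+1)}$ and $\phi_{n-2}^{(\alpha+1,\beta+1)}$ as ``real combinations of a single real pair'' is only announced (``I expect that\dots'', ``should both appear as\dots'') and never carried out, and it is doubtful it can be in the form stated. A degree count already obstructs the naive version: $\deg\phi_n^{(\alpha+1,\beta+1)}=[n/2]$ while $\deg\phi_{n-2}^{(\alpha+1,\beta+1)}=[n/2]-1$, so a nondegenerate real combination of one common pair cannot produce the lower-degree polynomial. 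Also, the auxiliary claim that $\phi_n^{(\alpha+1,\beta+1)}$ interlaces $\phi_{n-1}^{(\alpha,\beta+1)}$ is not an instance of the remark to Theorem~\ref{th:Cs}, which raises \emph{both} parameters by one in the lower-index polynomial.

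What the paper does instead --- and what your write-up never reaches --- is a direct zero-ordering argument in which $\phi_n^{(\alpha,\beta)}$ acts as a pivot. Corollary~\ref{cr:interl_2} says that $\phi_n^{(\alpha,\beta)}$ interlaces both $\mu\phi_{n-2}^{(\alpha+1,\beta+1)}$ and $\mu\phi_n^{(\alpha+1,\beta+1)}$, the factor $\mu$ fixing which zero lies closer to the origin; Corollary~\ref{cor:main}, applied to the parameter pairs $(\alpha+1,\beta+1)$ and $(\alpha,\beta)$ (both admissible since $\beta>0$), supplies the consecutive-index interlacings with the same orientation. Chaining these inequalities gives
\[
\zr_i\bigl(\phi_{n-2}^{(\alpha+1,\beta+1)}\bigr)
<\zr_i\bigl(\phi_{n-1}^{(\alpha+1,\beta+1)}\bigr)
<\zr_i\bigl(\phi_{n}^{(\alpha+1,\beta+1)}\bigr)
<\zr_i\bigl(\phi_{n}^{(\alpha,\beta)}\bigr)
<\zr_{i-1}\bigl(\phi_{n-2}^{(\alpha+1,\beta+1)}\bigr),
\]
which sandwiches exactly one zero of $\phi_{n}^{(\alpha+1,\beta+1)}$ between consecutive zeros of $\phi_{n-2}^{(\alpha+1,\beta+1)}$; a second, analogous chain settles the $(\alpha,\beta)$ pair. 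No appeal to Lemma~\ref{lemma:2pair_interl} or to the derivative identities is needed beyond what is already packaged in the two corollaries. Without this ordering step, or a fully worked-out substitute for it, your argument does not close.
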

\begin{proof}
    According to Corollary~\ref{cor:main}, we have
    \begin{equation}\label{eq:z_rel0}
    \begin{aligned}
        &
        \zr_i\left(\phi_{n-2}^{(\alpha+1,\beta+1)}\right)
        <\zr_i\left(\phi_{n-1}^{(\alpha+1,\beta+1)}\right)
        <\zr_{i-1}\left(\phi_{n-2}^{(\alpha+1,\beta+1)}\right),
        \\
        &
        \zr_i\left(\phi_{n-1}^{(\alpha+1,\beta+1)}\right)
        <\zr_i\left(\phi_{n}^{(\alpha+1,\beta+1)}\right)
        <\zr_{i-1}\left(\phi_{n-1}^{(\alpha+1,\beta+1)}\right)
    \end{aligned}
    \end{equation}
    for any natural~$i\leqslant n/2$. From Corollary~\ref{cr:interl_2} we obtain that
    \begin{align}\label{eq:cz_rel3}
        &
        \zr_i\left(\phi_{n-2}^{(\alpha+1,\beta+1)}\right)
        <\zr_i\left(\phi_{n}^{(\alpha,\beta)}\right)
        <\zr_{i-1}\left(\phi_{n-2}^{(\alpha+1,\beta+1)}\right),
        \\
        &
        \label{eq:cz_rel5}
        \zr_i\left(\phi_{n}^{(\alpha+1,\beta+1)}\right)
        <\zr_i\left(\phi_{n}^{(\alpha,\beta)}\right)
        <\zr_{i-1}\left(\phi_{n}^{(\alpha+1,\beta+1)}\right).
    \end{align}
    
    Bringing together the right inequality in~\eqref{eq:cz_rel3} and the left inequalities
    in~\eqref{eq:cz_rel5} and~\eqref{eq:z_rel0}, we obtain
    \begin{equation} \label{eq:z_rel1}
        \zr_i\left(\phi_{n-2}^{(\alpha+1,\beta+1)}\right)
        \overset{\eqref{eq:z_rel0}}<\zr_i\left(\phi_{n-1}^{(\alpha+1,\beta+1)}\right)
        \overset{\eqref{eq:z_rel0}}<\zr_i\left(\phi_{n}^{(\alpha+1,\beta+1)}\right)
        \overset{\eqref{eq:cz_rel5}}<\zr_i\left(\phi_{n}^{(\alpha,\beta)}\right)
        \overset{\eqref{eq:cz_rel3}}<\zr_{i-1}\left(\phi_{n-2}^{(\alpha+1,\beta+1)}\right)
    \end{equation}
    % The relations~\eqref{eq:z_rel0}--\eqref{eq:z_rel1} hold
    for all natural~$i\leqslant n/2$. This relation implies that the zeros of
    $\phi_{n}^{(\alpha+1,\beta+1)}(\mu)$ and~$\phi_{n-2}^{(\alpha+1,\beta+1)}(\mu)$ interlace.
% \end{proof}
% \begin{theorem}\label{thm:cb_main_3}
%     If $-1<\alpha<0<\beta$ and $n=5,6,\dots$, the polynomial
%     $\phi_n^{(\alpha,\beta)}(x)$ interlaces $\phi_{n-2}^{(\alpha,\beta)}(x)$.
% \end{theorem}
% \begin{proof}

    By Corollary~\ref{cor:main} we obtain (cf.~\eqref{eq:z_rel0})
    \begin{equation*}
        \zr_i\left(\phi_{n-2}^{(\alpha,\beta)}\right)
        <\zr_i\left(\phi_{n-1}^{(\alpha,\beta)}\right)
        <\zr_i\left(\phi_{n}^{(\alpha,\beta)}\right),\quad i=1,2,\dots.
    \end{equation*}
    This chain can be continued with the left inequality in~\eqref{eq:cz_rel5} and the right
    inequality in~\eqref{eq:cz_rel3} so that
    \[
        \zr_i\left(\phi_{n-2}^{(\alpha+1,\beta+1)}(\mu)\right)
        \overset{\eqref{eq:cz_rel5}}<\zr_i\left(\phi_{n-2}^{(\alpha,\beta)}\right)
        <\zr_i\left(\phi_{n-1}^{(\alpha,\beta)}\right)
        <\zr_i\left(\phi_{n}^{(\alpha,\beta)}\right)
        \overset{\eqref{eq:cz_rel3}}<\zr_{i-1}\left(\phi_{n-2}^{(\alpha+1,\beta+1)}\right)
    \]
    for each natural~$i$.
\end{proof}

\section{Conclusion: relations between Conjecture~\texorpdfstring{\ref{con:main}}{A} and
    Conjecture~\texorpdfstring{\ref{con:main_2}}{B}}\label{sec:concl-relat-conj}
Here we obtain two instructive facts giving an idea about the limits of the current approach. In
the present note, we used a modification of the method applied in~\cite{CW}, so it has the same
deficiency: the parameters~$\alpha$ and~$\beta$ are constrained to provide the positivity
of~$w'_{\alpha,\beta}(x)$ and the convergence of integrals. However, these sufficient conditions
seem to be quite far from being necessary.

The following two lemmata coupled with Theorem~\ref{thm:cb_main_2} give no new parameter range
for Conjecture~\ref{con:main} to hold. At the same time, the comparison to Lemma~\ref{lem:main}
clearly shows that this conjecture is less restrictive than Conjecture~\ref{con:main_2}.

\begin{lemma}\label{lemma:instr1}
    If the polynomials $\phi_n^{(\alpha,\beta)}(\mu)$, $\phi_{n-1}^{(\alpha,\beta)}(\mu)$ and
    $\phi_{n-2}^{(\alpha,\beta)}(\mu)$ are pairwise interlacing in such a way that
    \[
    \zr_1\left(\phi_{n-2}^{(\alpha,\beta)}\right)
    <\zr_1\left(\phi_{n-1}^{(\alpha,\beta)}\right)
    <\zr_1\left(\phi_n^{(\alpha,\beta)}\right),
    \]
    then $\phi_n^{(\alpha,\beta-1)}(\mu)$ interlaces $\phi_{n-1}^{(\alpha,\beta-1)}(\mu)$.
\end{lemma}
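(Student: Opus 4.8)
The plan is to write both target polynomials as positive linear combinations of two among the three given ones and then to locate their zeros precisely. By the identity~\eqref{eq:fl_summ1}, once at index~$n$ and once at index~$n-1$,
\[
    \phi_n^{(\alpha,\beta-1)} = a\,\phi_n^{(\alpha,\beta)} + b\,\phi_{n-1}^{(\alpha,\beta)},
    \qquad
    \phi_{n-1}^{(\alpha,\beta-1)} = c\,\phi_{n-1}^{(\alpha,\beta)} + d\,\phi_{n-2}^{(\alpha,\beta)},
\]
with $a = \frac{n+\alpha+\beta}{2n+\alpha+\beta}$, $b = \frac{n+\alpha}{2n+\alpha+\beta}$, $c = \frac{n+\alpha+\beta-1}{2n+\alpha+\beta-2}$ and $d = \frac{n+\alpha-1}{2n+\alpha+\beta-2}$. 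In the situation where the polynomials $\phi_k^{(\alpha,\beta)}$ have positive coefficients --- which is exactly where the lemma is used together with Theorem~\ref{thm:cb_main_2} --- the numbers $a,b,c,d$ together with the leading coefficients of $\phi_n^{(\alpha,\beta)}$, $\phi_{n-1}^{(\alpha,\beta)}$ and $\phi_{n-2}^{(\alpha,\beta)}$ are all positive, and I use this throughout.

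First I would fix the order of all the zeros involved. Depending on the parity of~$n$, the polynomial $\phi_n^{(\alpha,\beta)}$ has the same degree as $\phi_{n-1}^{(\alpha,\beta)}$ or exceeds it by one, while $\phi_{n-2}^{(\alpha,\beta)}$ has the same degree as $\phi_{n-1}^{(\alpha,\beta)}$ or is smaller by one. The pairwise interlacing together with the prescribed order $\zr_1(\phi_{n-2}^{(\alpha,\beta)}) < \zr_1(\phi_{n-1}^{(\alpha,\beta)}) < \zr_1(\phi_n^{(\alpha,\beta)})$ forces, level by level, the single ``braid''
\[
    \zr_i(\phi_n^{(\alpha,\beta)}) > \zr_i(\phi_{n-1}^{(\alpha,\beta)}) > \zr_i(\phi_{n-2}^{(\alpha,\beta)}) > \zr_{i+1}(\phi_n^{(\alpha,\beta)})
\]
for every admissible~$i$: the inequalities between the $i$-th zeros are immediate from the two ``neighbouring'' interlacings and the given order, while $\zr_i(\phi_{n-2}^{(\alpha,\beta)}) > \zr_{i+1}(\phi_n^{(\alpha,\beta)})$ holds because both of these zeros lie in one and the same gap of $\phi_{n-1}^{(\alpha,\beta)}$ and the interlacing of $\phi_n^{(\alpha,\beta)}$ with $\phi_{n-2}^{(\alpha,\beta)}$ decides their order.

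The heart of the argument is the claim that each zero of $\phi_n^{(\alpha,\beta-1)}$ falls strictly between $\zr_i(\phi_{n-1}^{(\alpha,\beta)})$ and $\zr_i(\phi_n^{(\alpha,\beta)})$, while each zero of $\phi_{n-1}^{(\alpha,\beta-1)}$ falls strictly between $\zr_i(\phi_{n-2}^{(\alpha,\beta)})$ and $\zr_i(\phi_{n-1}^{(\alpha,\beta)})$. For the first I would consider $R(x) := \phi_n^{(\alpha,\beta)}(x)/\phi_{n-1}^{(\alpha,\beta)}(x)$; by coprimality, the zeros of $\phi_n^{(\alpha,\beta-1)} = a\phi_n^{(\alpha,\beta)} + b\phi_{n-1}^{(\alpha,\beta)}$ are exactly the solutions of $R(x) = -b/a < 0$. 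Since $\phi_n^{(\alpha,\beta)}$ and $\phi_{n-1}^{(\alpha,\beta)}$ form a real pair with negative zeros and with $\phi_n^{(\alpha,\beta)}$ outermost, the partial-fraction expansion of $R$ has a polynomial part of degree at most one with positive leading coefficient, and --- by a short sign count based on the braid --- all of its residues (each being $\phi_n^{(\alpha,\beta)}$ divided by the derivative of $\phi_{n-1}^{(\alpha,\beta)}$, evaluated at a zero of $\phi_{n-1}^{(\alpha,\beta)}$) are negative. Hence $R$ is strictly increasing on each of the intervals cut out by its poles, and inspecting its limits at the poles and at $\pm\infty$ shows that $R = -b/a$ has exactly one root in the interval containing $\zr_i(\phi_n^{(\alpha,\beta)})$, and --- as $-b/a < 0$ while $R$ vanishes at $\zr_i(\phi_n^{(\alpha,\beta)})$ and tends to $-\infty$ at the pole bounding that interval from the left --- this root lies in $(\zr_i(\phi_{n-1}^{(\alpha,\beta)}), \zr_i(\phi_n^{(\alpha,\beta)}))$; when $\deg\phi_n^{(\alpha,\beta)} > \deg\phi_{n-1}^{(\alpha,\beta)}$ there is in addition one root below the last zero of $\phi_n^{(\alpha,\beta)}$, and a count shows these are all the zeros. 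Thus $\zr_i(\phi_n^{(\alpha,\beta-1)}) \in (\zr_i(\phi_{n-1}^{(\alpha,\beta)}), \zr_i(\phi_n^{(\alpha,\beta)}))$ for each $i$ in range; applying the same reasoning to $\phi_{n-1}^{(\alpha,\beta)}/\phi_{n-2}^{(\alpha,\beta)}$ with the constant $-d/c$ gives $\zr_i(\phi_{n-1}^{(\alpha,\beta-1)}) \in (\zr_i(\phi_{n-2}^{(\alpha,\beta)}), \zr_i(\phi_{n-1}^{(\alpha,\beta)}))$.

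Finally I would assemble these localisations with the braid: for every admissible~$i$,
\[
    \zr_i(\phi_{n-1}^{(\alpha,\beta-1)}) < \zr_i(\phi_{n-1}^{(\alpha,\beta)}) < \zr_i(\phi_n^{(\alpha,\beta-1)})
    \quad\text{and}\quad
    \zr_i(\phi_{n-1}^{(\alpha,\beta-1)}) > \zr_i(\phi_{n-2}^{(\alpha,\beta)}) > \zr_{i+1}(\phi_n^{(\alpha,\beta)}) > \zr_{i+1}(\phi_n^{(\alpha,\beta-1)}),
\]
and these two chains are exactly the assertion that $\phi_n^{(\alpha,\beta-1)}$ interlaces $\phi_{n-1}^{(\alpha,\beta-1)}$ --- uniformly in the parity of~$n$, i.e. whether $\deg\phi_n^{(\alpha,\beta-1)}$ equals $\deg\phi_{n-1}^{(\alpha,\beta-1)}$ or exceeds it by one --- with coprimality coming for free from the strict inequalities. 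I expect the residue-sign bookkeeping of the third paragraph to be the main obstacle: one has to verify that each zero of the combination lands on the prescribed side of $\zr_i(\phi_{n-1}^{(\alpha,\beta)})$ and not in the adjacent gap, and this must be carried out so as to cover both parities of~$n$ at once, since the degrees of two consecutive $\phi$'s differ by $0$ or by $1$ according to $n \bmod 2$.
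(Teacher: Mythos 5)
Your proof is correct and takes essentially the same route as the paper's: both decompose $\phi_n^{(\alpha,\beta-1)}$ and $\phi_{n-1}^{(\alpha,\beta-1)}$ as positive combinations via \eqref{eq:fl_summ1} and locate their zeros in the gaps $\bigl(\zr_i\bigl(\phi_{n-1}^{(\alpha,\beta)}\bigr),\zr_i\bigl(\phi_{n}^{(\alpha,\beta)}\bigr)\bigr)$ and $\bigl(\zr_i\bigl(\phi_{n-2}^{(\alpha,\beta)}\bigr),\zr_i\bigl(\phi_{n-1}^{(\alpha,\beta)}\bigr)\bigr)$, which combined with the ``braid'' ordering gives the interlacing. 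The paper merely asserts this chain of inequalities; your monotone-ratio argument supplies the justification it leaves implicit.
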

\begin{proof}
    By the formula~\eqref{eq:fl_summ1}, $\phi_n^{(\alpha,\beta-1)}(\mu)$ and
    $\phi_{n-1}^{(\alpha,\beta-1)}(\mu)$ have only real zeros. Furthermore,
    \[
    \zr_{i+1}\left(\phi_n^{(\alpha,\beta)}\right)
    <\zr_i\left(\phi_{n-2}^{(\alpha,\beta)}\right)
    <\zr_i\left(\phi_{n-1}^{(\alpha,\beta-1)}\right)
    <\zr_i\left(\phi_{n-1}^{(\alpha,\beta)}\right)
    <\zr_i\left(\phi_n^{(\alpha,\beta-1)}\right)
    <\zr_i\left(\phi_n^{(\alpha,\beta)}\right)
    \]
    for natural~$i=1,\dots,\big[\frac{n-1}2\big]$. This implies the interlacing property for the polynomials
    $\phi_n^{(\alpha,\beta-1)}(\mu)$ and~$\phi_{n-1}^{(\alpha,\beta-1)}(\mu)$.
\end{proof}

As an intermediate result (Corollary~\ref{cr:interl_2}) we had the interlacing property of
$\phi_n^{(\alpha,\beta)}(\mu)$ and $\phi_{n}^{(\alpha+1,\beta+1)}(\mu)$ when the parameters
satisfy~$-1<\alpha<0<\beta$. Such a fact allows us to get a relationship complementing
Lemma~\ref{lemma:instr1}.
\begin{lemma}
    Let the polynomial pairs
    $\left(\phi_n^{(\alpha,\beta)}(\mu),\phi_{n}^{(\alpha+1,\beta+1)}(\mu)\right)$, \
    $\left(\phi_n^{(\alpha,\beta)}(\mu),\phi_{n-1}^{(\alpha+1,\beta+1)}(\mu)\right)$ be
    interlacing in such a way that
    \begin{equation}\label{eq:zrel_mu}
        \zr_1\left(\phi_{n-1}^{(\alpha+1,\beta+1)}\right)
        <\zr_1\left(\phi_{n}^{(\alpha,\beta)}\right),
        \quad
        \zr_1\left(\phi_{n}^{(\alpha+1,\beta+1)}\right)
        <\zr_1\left(\phi_{n}^{(\alpha,\beta)}\right).
    \end{equation}
    Then $\phi_n^{(\alpha+1,\beta)}(\mu)$ interlaces $\phi_{n-1}^{(\alpha+1,\beta)}(\mu)$.
\end{lemma}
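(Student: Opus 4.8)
The plan is to reduce the claim, via Lemma~\ref{lem:main} used with the parameter pair $(\alpha+1,\beta)$ in place of $(\alpha,\beta)$, to the single assertion that $\phi_n^{(\alpha+1,\beta-1)}(\mu)$ has only simple real zeros, and then to read this off from the two interlacing hypotheses by means of the summation identities of Section~\ref{sec:some-polyn-relat}. Two of them suffice. Formula~\eqref{eq:fl_summ1} with parameters $(\alpha+1,\beta+1)$ becomes $(2n+\alpha+\beta+2)\phi_n^{(\alpha+1,\beta)}=(n+\alpha+\beta+2)\phi_n^{(\alpha+1,\beta+1)}+(n+\alpha+1)\phi_{n-1}^{(\alpha+1,\beta+1)}$, exhibiting $\phi_n^{(\alpha+1,\beta)}$ as a positive combination of the two polynomials that, by hypothesis, interlace $\phi_n^{(\alpha,\beta)}$. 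Formula~\eqref{eq:fl_summ3} with parameters $(\alpha+1,\beta)$ becomes $(n+\alpha+\beta+1)\phi_n^{(\alpha+1,\beta)}=(n+\beta)\phi_n^{(\alpha+1,\beta-1)}+(n+\alpha+1)\phi_n^{(\alpha,\beta)}$, exhibiting $\phi_n^{(\alpha+1,\beta-1)}$ as a (sign-indefinite) real combination of $\phi_n^{(\alpha+1,\beta)}$ and $\phi_n^{(\alpha,\beta)}$. Throughout, all polynomials that occur have positive coefficients, hence only negative zeros, and the parameters are assumed to lie in the range where Lemma~\ref{lem:main} applies.

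The core step will be to show that $\phi_n^{(\alpha+1,\beta)}$ has only real zeros and, moreover, interlaces $\phi_n^{(\alpha,\beta)}$ strictly. Put $m=[n/2]$ and let $0=\mu_0>\mu_1>\dots>\mu_m$ enumerate $0$ together with the (negative) zeros of $\phi_n^{(\alpha,\beta)}$. By~\eqref{eq:zrel_mu} the leftmost zeros of $\phi_n^{(\alpha+1,\beta+1)}$ and $\phi_{n-1}^{(\alpha+1,\beta+1)}$ lie to the left of $\mu_1$, so each of these polynomials is positive on $[\mu_1,0]$, and by the strict interlacing assumed in the hypotheses each changes sign exactly once on every interval $(\mu_{i+1},\mu_i)$, $i=1,\dots,m-1$. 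Hence each has sign $(-1)^{i-1}$ at $\mu_i$ for $i=1,\dots,m$ and is positive at $\mu_0$, and the same then holds for the positive combination $(2n+\alpha+\beta+2)\phi_n^{(\alpha+1,\beta)}$: its signs at $\mu_0,\mu_1,\dots,\mu_m$ read $+,+,-,+,\dots,(-1)^{m-1}$, giving $m-1$ sign changes; adding one more below $\mu_m$ (the polynomial has degree $m$ with positive leading coefficient, hence sign $(-1)^m$ for large negative $\mu$) yields $m$ sign changes, hence $m$ real zeros — one strictly inside each $(\mu_{i+1},\mu_i)$, $i=1,\dots,m-1$, and one to the left of $\mu_m$. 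Thus $\phi_n^{(\alpha+1,\beta)}$ has only real negative zeros, none coinciding with a zero of $\phi_n^{(\alpha,\beta)}$, and $\zr_i\bigl(\phi_n^{(\alpha+1,\beta)}\bigr)<\zr_i\bigl(\phi_n^{(\alpha,\beta)}\bigr)$; that is, $\bigl(\phi_n^{(\alpha+1,\beta)},\phi_n^{(\alpha,\beta)}\bigr)$ is a strictly interlacing (in particular coprime) pair.

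It remains to assemble the pieces. Being a real combination of the strictly interlacing polynomials $\phi_n^{(\alpha+1,\beta)}$ and $\phi_n^{(\alpha,\beta)}$ — see the rearranged form of~\eqref{eq:fl_summ3} above — the polynomial $\phi_n^{(\alpha+1,\beta-1)}$ has only real zeros; and since a real combination of two strictly interlacing polynomials has no multiple zero (its Wronskian with either of them does not vanish), these zeros are simple. Lemma~\ref{lem:main}, applied with $(\alpha+1,\beta)$ for $(\alpha,\beta)$, now gives exactly that the zeros of $\phi_n^{(\alpha+1,\beta)}$ and $\phi_{n-1}^{(\alpha+1,\beta)}$ are negative and interlace strictly, which is the assertion of the lemma.

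The hard part will be the sign-counting of the second paragraph: the hypotheses only say that $\phi_n^{(\alpha,\beta)}$ interlaces $\phi_n^{(\alpha+1,\beta+1)}$ and interlaces $\phi_{n-1}^{(\alpha+1,\beta+1)}$, and in general this does \emph{not} make those two polynomials interlace each other, so one cannot simply invoke a ``real pair'' argument for them. What makes the argument close is the rigid alignment forced by~\eqref{eq:zrel_mu}, namely that both leftmost zeros sit to the left of $\zr_1\bigl(\phi_n^{(\alpha,\beta)}\bigr)$; and one has to track separately the cases $n$ even and $n$ odd, in which $\deg\phi_{n-1}^{(\alpha+1,\beta+1)}$ equals $m-1$ or $m$, since this affects the sign of that polynomial at $\mu_m$ and the leading coefficient of the combination in~\eqref{eq:fl_summ1}.
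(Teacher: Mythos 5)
Your proof is correct and follows essentially the same route as the paper: the crux in both is that the positive combination $(n+\alpha+\beta+2)\phi_n^{(\alpha+1,\beta+1)}+(n+\alpha+1)\phi_{n-1}^{(\alpha+1,\beta+1)}=(2n+\alpha+\beta+2)\phi_n^{(\alpha+1,\beta)}$ together with the alignment~\eqref{eq:zrel_mu} forces $\phi_n^{(\alpha+1,\beta)}$ to interlace $\phi_n^{(\alpha,\beta)}$ strictly, and your sign count at the zeros of $\phi_n^{(\alpha,\beta)}$ is precisely the justification the paper leaves implicit. The only divergence is the endgame: the paper reads off $\phi_{n-1}^{(\alpha+1,\beta)}$ directly as the real combination $(n+\alpha+\beta+1)\phi_{n}^{(\alpha+1,\beta)}-(2n+\alpha+\beta+1)\phi_{n}^{(\alpha,\beta)}$ via the shifted~\eqref{eq:fl_summ2}, whereas you detour through $\phi_n^{(\alpha+1,\beta-1)}$ and Lemma~\ref{lem:main} --- both being immediate consequences of the real-pair property of the strictly interlacing pair just obtained.
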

\begin{proof}
    The identities~\eqref{eq:fl_summ1} and~\eqref{eq:fl_summ2} give
    \begin{align}\label{eq:zrel_x1}
        (n+\alpha+\beta+2)\phi_n^{(\alpha+1,\beta+1)}+(n+\alpha+1)\phi_{n-1}^{(\alpha+1,\beta+1)}
        &=(2n+\alpha+\beta+2) \phi_{n}^{(\alpha+1,\beta)}
        ,\\\label{eq:zrel_x2}
        (n+\alpha+\beta+1)\phi_{n}^{(\alpha+1,\beta)}-(2n+\alpha+\beta+1) \phi_{n}^{(\alpha,\beta)}
        &=(n+\beta)\phi_{n-1}^{(\alpha+1,\beta)}
        .
    \end{align}
    From the inequalities~\eqref{eq:zrel_mu} we obtain that each interval
    $\left(\zr_{i+1}\left(\phi_{n}^{(\alpha,\beta)}\right),
        \zr_i\left(\phi_{n}^{(\alpha,\beta)}\right)\right)$, \ $i=1,\dots,[n/2]-1$, contains the
    points~$\zr_i\left(\phi_{n-1}^{(\alpha+1,\beta+1)}\right)$
    and~$\zr_i\left(\phi_{n}^{(\alpha+1,\beta+1)}\right)$ and no other zeros of the
    polynomials~$\phi_{n-1}^{(\alpha+1,\beta+1)}$ and~$\phi_{n}^{(\alpha+1,\beta+1)}$. Thus, the
    left-hand side of~\eqref{eq:zrel_x1} also has exactly one zero on each of the intervals. As
    a result, $\phi_{n}^{(\alpha+1,\beta)}$ interlaces~$\phi_{n}^{(\alpha,\beta)}$, so the zeros
    of their difference appearing in~\eqref{eq:zrel_x2} and hence
    of~$\phi_{n-1}^{(\alpha+1,\beta)}$ are interlacing with the zeros
    of~$\phi_{n}^{(\alpha+1,\beta)}$.
\end{proof}

%\appendix
\section*{Acknowledgments}
The authors thank Olga Holtz, Ren\'e van Bevern, Olga Kushel and an anonymous referee for their
attention and valuable remarks.

%\setstretch{1}

\vspace{1em}
%\setstretch{1.05}
\makebox[21ex][l]{Alexander Dyachenko}
\texttt{{\href{mailto:dyachenk@math.tu-berlin.de}{dyachenk@math.tu-berlin.de},
        \href{mailto:diachenko@sfedu.ru}{diachenko@sfedu.ru}}}\\[.2em]
{\itshape
    \makebox[21ex][l]{}
    TU-Berlin, Institut f\"ur Mathematik, Sekr.~MA 4-2\\
    \makebox[21ex][l]{}
    Straße des 17. Juni 136, 10623 Berlin, Germany.
}\\[1em]
\makebox[21ex][l]{Galina van Bevern}
\texttt{\href{mailto:gvbevern@yandex.com}{gvbevern@yandex.com}}\\[.2em]
{\itshape
    \makebox[21ex][l]{}
    TU-Berlin, Institut f\"ur Mathematik, Sekr.~MA 4-2\\
    \makebox[21ex][l]{}
    Straße des 17. Juni 136, 10623 Berlin, Germany.
}\\[.2em]
{\itshape
    \makebox[21ex][l]{}
    (Current Address:
    Tomsk Polytechnic University, Institute of Physics and Technology\\
    \makebox[21ex][l]{}
    Department of Higher Math.\ and Math.\ Phys., 
    Lenin Avenue 2/A, 634000 Tomsk, Russia.)}%\\[1em]

\end{document}